\documentclass[11pt]{amsart}

\usepackage[margin=1in]{geometry}
\usepackage{amsmath,amssymb,amsthm,amsfonts,mathrsfs}
\usepackage{enumitem}
\usepackage{tikz-cd}
\usepackage{hyperref}

\hypersetup{
    colorlinks=true,
    linkcolor=blue!60!black,
    citecolor=blue!60!black,
    urlcolor=blue!60!black
}

\newtheorem{theorem}{Theorem}[section]
\newtheorem{proposition}[theorem]{Proposition}
\newtheorem{lemma}[theorem]{Lemma}
\newtheorem{corollary}[theorem]{Corollary}

\theoremstyle{definition}
\newtheorem{definition}[theorem]{Definition}

\newtheorem{remark}[theorem]{Remark}
\newtheorem{question}[theorem]{Question}

\DeclareMathOperator{\supp}{supp}
\DeclareMathOperator{\acl}{acl}

\newcommand{\ZF}{\mathsf{ZF}}
\newcommand{\ZFA}{\mathsf{ZFA}}
\newcommand{\ZFC}{\mathsf{ZFC}}
\newcommand{\AC}{\mathsf{AC}}
\newcommand{\Fin}{\mathrm{Fin}}
\newcommand{\DFin}{\mathrm{D\text{-}Fin}}
\newcommand{\BFin}{\mathrm{B\text{-}Fin}}
\newcommand{\CFin}{\mathrm{C\text{-}Fin}}
\newcommand{\EFin}{\mathrm{E\text{-}Fin}}
\newcommand{\AFin}{\mathrm{A\text{-}Fin}}
\newcommand{\HFin}{\mathrm{H\text{-}Fin}}

\newcommand{\RnFin}[1]{R_{#1}\text{-}\mathrm{Fin}}
\newcommand{\FIN}{\mathrm{FIN}}
\newcommand{\FU}{\mathrm{FU}}
\newcommand{\TS}{\mathrm{TS}}
\newcommand{\RRT}{\mathrm{RRT}}
\newcommand{\CRT}{\mathrm{CRT}}
\newcommand{\RT}{\mathrm{RT}}
\newcommand{\NN}{\mathcal{N}}

\newcommand{\RRTFin}[2]{\RRT^{#1}_{#2}\text{-}\mathrm{Fin}}
\newcommand{\CRTFin}{\CRT\text{-}\mathrm{Fin}}

\title[Ramsey-theoretic finiteness in choiceless set theory]
{Ramsey-theoretic finiteness in choiceless set theory:\\ the Gowers collapse, rainbow and canonical separations}

\author{Brinda Venkataramani}
\address{Department of Mathematics, University of Toronto, 40 St.~George Street, Toronto, Ontario M5S 2E4, Canada}
\email{brinda.venkataramani@mail.utoronto.ca}

\date{July 23, 2026}

\begin{document}

\begin{abstract}
We study finiteness classes arising from Ramsey-theoretic principles in set theory without the Axiom of Choice.  First, we answer a question of Brot, Cao, and Fern\'andez-Bret\'on concerning Gowers' \(\FIN_k\) theorem.  The \(\FIN_k\) operation gives rise to a finiteness notion for each \(k\geq 1\), but we show that this notion is independent of the value of \(k\): the resulting hierarchy collapses, and every level is equivalent to \(H\)-finiteness, i.e., to Dedekind-finiteness of \([X]^{<\omega}\). Thus the Gowers classes do not produce new levels beyond \(H\)-finiteness.  We then turn to the Rainbow Ramsey theorem and the Canonical Ramsey theorem.  The corresponding failure classes \(\RRTFin{m}{n}\) and \(\CRTFin\) are genuine finiteness classes contained in \(\DFin\).  We prove several inclusions placing them relative to standard Ramsey-theoretic finiteness classes of Brot--Cao--Fern\'andez-Bret\'on, prove their independence from the collapsed Hindman--Gowers class, and analyze the parameter dependence of the rainbow classes through Fraenkel--Mostowski examples.  The finer questions of strictness, and the full two-parameter structure of the rainbow classes, are left open where the present arguments do not settle them.
\end{abstract}

\maketitle

\section{Introduction}

A \emph{finiteness class} is a class \(\mathcal F\) of sets satisfying:
\begin{enumerate}[label=(\roman*)]
    \item closure under subsets;
    \item closure under equipotence;
    \item every finite set belongs to \(\mathcal F\);
    \item \(\omega\notin \mathcal F\).
\end{enumerate}
Every finiteness class lies between \(\Fin\), the class of finite sets, and the largest finiteness class, which consists of all sets that do not have a countably infinite subset; such sets are called \emph{Dedekind-finite}, and their class is denoted \(\DFin\).  See Truss \cite{TrussFiniteness} for a systematic study of finiteness in this sense.  Under the Axiom of Choice, \(\Fin=\DFin\) and there is nothing further to say.  Without choice, however, the situation changes dramatically: many inequivalent finiteness classes arise, each measuring a different obstruction to building countable or order-like structure on a set.  We say that two finiteness classes are \emph{independent} if neither of them is provably contained in the other in \(\ZF\).

Brot, Cao, and Fern\'andez-Bret\'on \cite{BCF} studied finiteness classes arising from failures of Ramsey-theoretic statements in \(\ZF\).  In particular, they introduced finiteness notions associated with Ramsey's theorem and Hindman's finite unions theorem, and asked what analogous notions arise from Gowers' \(\FIN_k\) theorem.  The first goal of this paper is to answer that question.  The answer is that the apparent Gowers hierarchy collapses completely:
\[
    G_k\text{-}\Fin=H\text{-}\Fin
    \qquad\text{for every }k\geq 1.
\]
More precisely, \(X\) is \(G_k\)-finite if and only if \([X]^{<\omega}\) is Dedekind-finite, equivalently if and only if \(X\) is \(H\)-finite.  The property that \([X]^{<\omega}\) is Dedekind-finite admits many equivalent formulations; see \cite{BlackadarFarahKaragila}, \cite{KaragilaSchlicht}, and \cite{KeremedisWajch}, where sets with this property are called \emph{Cohen finite} or \emph{quasi Dedekind-finite}.  We shall use the term \(H\)-finiteness throughout the paper.

The collapse of the Gowers hierarchy motivates the second part of the paper.  If Gowers' theorem does not produce new finiteness classes beyond \(H\)-finiteness, then one should ask which Ramsey-theoretic principles do.  We study two such principles: the Rainbow Ramsey theorem (\(\RRT^m_n\)) and the Canonical Ramsey theorem (\(\CRT\)), as investigated in the choiceless setting by Banerjee, Gopaulsingh, and Moln\'ar \cite{BGM}; the statements \(\RRT^m_n(X)\) and \(\CRT(X)\) are defined precisely in Section~\ref{sec:rainbow}.  For arbitrary sets \(X\), we define \(X\) to be \(\RRT^m_n\)-finite if \(\RRT^m_n(X)\) fails, and \(\CRT\)-finite if \(\CRT(X)\) fails.  These give finiteness classes \(\RRTFin{m}{n}\) and \(\CRTFin\).

The resulting picture is different from the Gowers case.  The rainbow and canonical classes do not collapse to \(H\)-finiteness.  Indeed, \(\RRTFin{m}{n}\) is independent from \(\HFin\), and \(\CRTFin\) is independent from \(\HFin\).  Thus the Gowers collapse should be viewed as a rigidity phenomenon particular to the setting of finite unions and tetris sums, not as a general feature of Ramsey-theoretic finiteness.

\subsection*{Main results}

Some of the notation used below (in particular \(\RnFin{n}\), \(\RRTFin{m}{n}\), and \(\CRTFin\)) will be defined in the relevant sections; we state the results here for the reader's convenience.

The main results are as follows.

\begin{enumerate}[label=(\arabic*)]
    \item (Theorem~\ref{thm:gowers-collapse}.)  For every \(k\geq 1\), \(G_k\)-finiteness is equivalent to \(H\)-finiteness and to Dedekind-finiteness of \([X]^{<\omega}\).  Thus the Gowers hierarchy collapses to the \(H\)-finite class.

    \item (Corollaries~\ref{cor:rrt-fin-class} and~\ref{cor:crt-fin-class}.)  For every \(m,n\geq 2\), the classes \(\RRTFin{m}{n}\) and \(\CRTFin\) are finiteness classes.

    \item (Proposition~\ref{prop:bfin-crt} and Theorem~\ref{thm:crt-implies-rt}.)  The canonical Ramsey class contains several familiar choiceless finiteness classes.  In particular, \(\BFin\subseteq \CRTFin\), and \(\RnFin{n}\subseteq \CRTFin\) for every finite \(n\geq 2\).

    \item (Theorems~\ref{thm:rrt-hfin-indep} and~\ref{thm:crt-hfin-indep}.)  The new classes are independent from the collapsed Hindman--Gowers class: \(\RRTFin{m}{n}\not\subseteq \HFin\) and \(\HFin\not\subseteq \RRTFin{m}{n}\), and likewise \(\CRTFin\not\subseteq \HFin\) and \(\HFin\not\subseteq \CRTFin\).

    \item (Section~\ref{sec:parameter}.)  We give Fraenkel--Mostowski examples, including block models and Rado-type models, illustrating the parameter behavior of the rainbow classes.  Some of the stronger general separations are isolated as conjectural or conditional statements where the present argument still requires a more detailed orbit analysis.
\end{enumerate}

\section{Background finiteness classes}

We work in \(\ZF\), unless explicitly stated otherwise.  We use \(\omega\) for the set of natural numbers.

\begin{definition}
A set \(X\) is \emph{Dedekind-finite}, or \(D\)-finite, if there is no injection \(\omega\to X\).  We write \(\DFin\) for the class of Dedekind-finite sets.
\end{definition}

Equivalently, \(X\) is Dedekind-finite if no subset of \(X\) is equipotent with \(\omega\).  It is standard that \(\DFin\) is the largest finiteness class: if \(\mathcal F\) is a finiteness class and \(X\in\mathcal F\), then \(X\) must be Dedekind-finite.

We recall the following standard weak finiteness notions.

\begin{definition}
Let \(X\) be a set.
\begin{enumerate}[label=(\arabic*)]
    \item \(X\) is \emph{\(A\)-finite} if it cannot be partitioned into two disjoint infinite sets.
    \item \(X\) is \emph{\(B\)-finite} if no infinite subset of \(X\) is linearly orderable.
    \item \(X\) is \emph{\(C\)-finite} if there is no surjection \(X\twoheadrightarrow \omega\).
    \item \(X\) is \emph{\(E\)-finite} if no proper subset of \(X\) surjects onto \(X\).
\end{enumerate}
For any of these notions, \(X\) is called \(I\)-infinite if it is not \(I\)-finite.
\end{definition}

Infinite \(A\)-finite sets are commonly referred to as \emph{amorphous} sets.

The following implications are standard:
\[
    \AFin\subseteq\BFin,
    \qquad
    \AFin\subseteq\CFin,
    \qquad
    \CFin\subseteq\EFin.
\]
These are the only \(\ZF\)-provable inclusions among these four classes; see Truss \cite{TrussFiniteness} and \cite{JechAC,Truss} for the relevant independence results.

We also recall the Hindman-type class from \cite{BCF}.

\begin{definition}
Let \(X\) be a set and let \(Y\subseteq [X]^{<\omega}\).  Define
\[
    \FU(Y)=\left\{\bigcup_{i=1}^r y_i \;\middle|\; r\geq 1,\ y_1,\dots,y_r\in Y\text{ distinct}\right\}.
\]
If the elements of \(Y\) are pairwise disjoint, this is the usual finite-unions set generated by \(Y\).
\end{definition}

\begin{definition}
A set \(X\) is \emph{\(H\)-finite} if there is a coloring \(c:[X]^{<\omega}\to 2\) such that for every infinite pairwise disjoint \(Y\subseteq [X]^{<\omega}\), the set \(\FU(Y)\) is not monochromatic for \(c\).  The class of \(H\)-finite sets is denoted \(\HFin\).
\end{definition}

\begin{definition}
For \(n\geq 2\), let \(\RT^2_n(X)\) denote the assertion that every coloring \(c:[X]^2\to n\) admits an infinite homogeneous subset, i.e., Ramsey's theorem for pairs and \(n\) colors on \(X\).  A set \(X\) is \emph{\(R_n\)-finite} if \(\RT^2_n(X)\) fails, that is, if there is a coloring \(c:[X]^2\to n\) with no infinite homogeneous subset.  We denote the corresponding class by \(\RnFin{n}\).
\end{definition}

For the position of Ramsey's theorem among weak choice principles, see Blass \cite{Blass77}.

\section{The Gowers collapse}

We now define the finiteness notions arising from Gowers' \(\FIN_k\) theorem and prove that they all coincide with \(\HFin\).

\subsection{The spaces \(\FIN_k(X)\)}

\begin{definition}
Let \(X\) be a set and let \(k\geq 1\).  Define
\[
    \FIN_k(X)=
    \left\{
        f:X\to\{0,1,\dots,k\}
        \;\middle|\;
        f^{-1}(k)\neq\varnothing\text{ and }\supp(f)\text{ is finite}
    \right\},
\]
where \(\supp(f)=\{x\in X\mid f(x)\neq 0\}\).
\end{definition}

Thus \(\FIN_1(X)\) is naturally identified with \([X]^{<\omega}\setminus\{\varnothing\}\) by sending \(f\) to \(f^{-1}(1)=\supp(f)\).

\begin{definition}[Tetris operation]
For \(k\geq 2\), the \emph{tetris operation} is the map
\[
    T:\FIN_k(X)\to \FIN_{k-1}(X)
\]
defined by
\[
    T(f)(x)=\max(f(x)-1,0).
\]
Higher iterates are defined by \(T^0=\mathrm{id}\) and \(T^{j+1}=T\circ T^j\) as long as the expression is defined.  We also write \(T^k(f)=0\) for the zero function; this lies outside \(\FIN_j(X)\) for \(j\geq 1\), but it is convenient notation when describing sums.
\end{definition}

If \(f\in\FIN_k(X)\), then \(T(f)\in\FIN_{k-1}(X)\), since every point at level \(k\) is sent to level \(k-1\).  Moreover, for \(0\leq j\leq k-1\),
\[
    \supp(T^j(f))=f^{-1}\bigl(\{j+1,\dots,k\}\bigr).
\]

\begin{definition}[Block families]
A family \(Y\subseteq \FIN_k(X)\) is a \emph{block family} if its elements have pairwise disjoint supports: whenever \(y,y'\in Y\) are distinct,
\[
    \supp(y)\cap \supp(y')=\varnothing.
\]
\end{definition}

In the classical ordered setting \(X=\omega\), one usually works with block sequences \((x_i)_{i<\omega}\) satisfying \(\max\supp(x_i)<\min\supp(x_{i+1})\).  Since arbitrary sets need not be linearly orderable, we use block families instead.

\begin{definition}[Tetris sums]
Let \(Y\subseteq \FIN_k(X)\) be a block family.  The set of \emph{tetris sums} generated by \(Y\), denoted \(\TS(Y)\), consists of all functions of the form
\[
    \sum_{i=1}^r T^{t_i}(y_i),
\]
where \(r\geq 1\), \(y_1,
\dots,y_r\in Y\) are distinct, \(t_i\in\{0,1,\dots,k-1\}\), and at least one \(t_i=0\).  The sum is computed coordinatewise.
\end{definition}

Because the supports of the \(y_i\)'s are disjoint, the coordinatewise sum is simply the union of the individual functions on disjoint supports.  The requirement that some \(t_i=0\) ensures that the resulting function still attains the value \(k\), hence belongs to \(\FIN_k(X)\).

\begin{theorem}[Gowers' \(\FIN_k\) theorem \cite{Gowers}]\label{thm:gowers}
For every \(k\geq 1\), every finite coloring of \(\FIN_k(\omega)\) admits an infinite block sequence whose tetris sums are monochromatic.
\end{theorem}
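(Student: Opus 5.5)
This is Gowers' theorem, which the paper quotes from \cite{Gowers}. I would reprove it along the lines of the standard ultrafilter (Galvin--Glazer) argument. That argument uses choice, but this costs nothing here. For a fixed coloring \(c:\FIN_k(\omega)\to r\), the conclusion is a \(\Sigma^1_1\) statement about the real coding \(c\), so the theorem is a \(\Pi^1_2\) sentence. Shoenfield absoluteness then transfers a \(\ZFC\) proof carried out in \(L[c]\) to \(V\), and this is all that is needed for the later \(\ZF\) applications, which concern \(\FIN_k(\omega)\) only. So I would work in \(\ZFC\).

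\textbf{Step 1: the semigroups.} For \(1\le j\le k\), let \(\gamma\FIN_j\) be the set of ultrafilters \(\mathcal U\) on \(\FIN_j(\omega)\) with
\[
\{f:\min\supp f>n\}\in\mathcal U\quad\text{for every } n.
\]
For such \(\mathcal U,\mathcal V\) define
\[
A\in\mathcal U+\mathcal V\iff \{f:\{g:\max\supp f<\min\supp g,\ f+g\in A\}\in\mathcal V\}\in\mathcal U .
\]
The same formula defines \(\mathcal U+\mathcal V\in\gamma\FIN_{\max(i,j)}\) for \(\mathcal U\in\gamma\FIN_i\) and \(\mathcal V\in\gamma\FIN_j\). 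Each \(\gamma\FIN_j\) is a compact right-topological semigroup. The tetris map \(T\) extends to a continuous surjective homomorphism \(T:\gamma\FIN_j\to\gamma\FIN_{j-1}\), because \(T(f+g)=T(f)+T(g)\) for block \(f,g\). Moreover \(\gamma\FIN_j\) is a two-sided ideal of \(\bigcup_{i\le j}\gamma\FIN_i\).

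\textbf{Step 2: a coherent sequence of idempotents.} This is the main obstacle. I want idempotents \(\mathcal U_j\in\gamma\FIN_j\), for \(1\le j\le k\), such that
\[
T(\mathcal U_j)=\mathcal U_{j-1}
\quad\text{and}\quad
\mathcal U_i+\mathcal U_j=\mathcal U_j+\mathcal U_i=\mathcal U_j\ \text{ for } i\le j .
\]
I would build them by induction on \(j\).
\begin{itemize}
\item Start with any idempotent \(\mathcal U_1\), which exists by the Ellis--Numakura lemma.
\item Given \(\mathcal U_{j-1}\), the set \(S=T^{-1}(\mathcal U_{j-1})\subseteq\gamma\FIN_j\) is a nonempty closed subsemigroup. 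The set \(\mathcal U_{j-1}+S+\mathcal U_{j-1}\) is again a closed subsemigroup, and \(T\) sends it to \(\mathcal U_{j-1}\), since \(T\) of a level-\((j-1)\) ultrafilter lands in level \(j-2\) and is absorbed by \(\mathcal U_{j-1}\) by induction.
\item Choose an idempotent \(\mathcal W\in S\) and put \(\mathcal U_j=\mathcal U_{j-1}+\mathcal W+\mathcal U_{j-1}\), or more robustly take an idempotent in the closed semigroup \(\mathcal U_{j-1}+S+\mathcal U_{j-1}\).
\item Check that \(\mathcal U_j\) absorbs \(\mathcal U_{j-1}\) on both sides, and hence absorbs all lower \(\mathcal U_i\) by induction.
\end{itemize}
If this two-sided absorption fails to come out directly, I would instead use minimal idempotents in the ideal \(\gamma\FIN_j\), ordered by \(e\le e'\iff e=e+e'=e'+e\). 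I would pick \(\mathcal U_j\) below a preimage idempotent and argue that \(T\) preserves this order together with minimality.

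\textbf{Step 3: extracting the block sequence.} Let \(A\) be the color class with \(A\in\mathcal U_k\). For \(t<k\) set \(A_t=T^{t}\text{-}\)image condition, namely the sets required by the ultrafilters \(\mathcal U_{k-t}\). Following the Galvin--Glazer proof of Hindman's theorem, for each level \(j\) I would use the shift sets
\[
B^\star=\{f\in B: \{g: f+g\in B\}\in\mathcal U_j\}.
\]
Idempotence together with the mixed absorption gives \(B^\star\in\mathcal U_j\), and \(B^\star\) is closed under the relevant shifts. I would then choose \(x_0<x_1<\cdots\) recursively. At stage \(n\), \(x_n\) is picked from the intersection of finitely many sets in \(\mathcal U_k\): for every tetris sum \(s\) already built from \(x_0,\dots,x_{n-1}\) and every \(t<k\), the sets \(\{g:s+T^t(g)\in A\}\) and their lower-level analogues. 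Membership of these sets in \(\mathcal U_k\) follows from \(T^t(\mathcal U_k)=\mathcal U_{k-t}\) and the absorption identities. The \(\min\supp\) condition on \(\gamma\FIN_k\) makes the resulting sequence a block sequence. Induction on the length of the sum then shows that every tetris sum lies in \(A\), so the tetris sums are monochromatic.
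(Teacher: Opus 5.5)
Your \(\ZF\) transfer is the same as the paper's. The paper does not reprove Gowers' theorem; it cites the \(\ZFC\) result and moves it into \(\ZF\) via \(L[c]\) and absoluteness. So the only part that goes beyond the paper is your \(\ZFC\) reproof, and its Step~2 has a genuine gap.

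The candidate \(\mathcal U_j=\mathcal U_{j-1}+\mathcal W+\mathcal U_{j-1}\) need not be idempotent. Squaring gives \(\mathcal U_{j-1}+\mathcal W+\mathcal U_{j-1}+\mathcal W+\mathcal U_{j-1}\), and nothing collapses the middle. Two-sided absorption of \(\mathcal U_{j-1}\) was never the difficulty; idempotency is.

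The ``more robust'' option fails for a different reason. \(\mathcal U_{j-1}+S+\mathcal U_{j-1}\) is a subsemigroup, but with your definition of \(+\) only the maps \(\mathcal V\mapsto\mathcal V+\mathcal W\) are continuous. Left translations \(\mathcal V\mapsto\mathcal W+\mathcal V\) by nonprincipal \(\mathcal W\) are in general not. So there is no reason for this set to be closed, and Ellis--Numakura cannot be applied to it.

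The minimal-idempotent fallback does not rescue this either. Taking \(\mathcal U_j\le\mathcal W\) with \(T(\mathcal W)=\mathcal U_{j-1}\) controls \(T(\mathcal U_j)\), but it gives no relation between \(\mathcal U_j\) and \(\mathcal U_{j-1}\).

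The missing idea is a one-sided trick:
\begin{enumerate}
\item By the homomorphism property and inductive absorption, \(\mathcal U_{j-1}+S\subseteq S\) and \(S+\mathcal U_{j-1}\subseteq S\).
\item Hence \(S+\mathcal U_{j-1}\), the image of the compact set \(S\) under the continuous map \(\mathcal V\mapsto\mathcal V+\mathcal U_{j-1}\), is a compact subsemigroup.
\item Pick an idempotent \(\mathcal W'\) in it, so \(\mathcal W'+\mathcal U_{j-1}=\mathcal W'\), and set \(\mathcal U_j=\mathcal U_{j-1}+\mathcal W'\).
\end{enumerate}
Then \(\mathcal U_j+\mathcal U_j=\mathcal U_{j-1}+\mathcal W'+\mathcal W'=\mathcal U_j\). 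Also \(\mathcal U_{j-1}+\mathcal U_j=\mathcal U_j=\mathcal U_j+\mathcal U_{j-1}\), and \(T(\mathcal U_j)=\mathcal U_{j-2}+\mathcal U_{j-1}=\mathcal U_{j-1}\). Absorption of the lower \(\mathcal U_i\) follows by transitivity.

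Step~3 also needs its invariant made explicit. For a level-\(k\) sum \(s\), membership of \(\{g: s+T^t(g)\in A\}\) in \(\mathcal U_k\) does not follow from \(T^t(\mathcal U_k)=\mathcal U_{k-t}\) and absorption alone. It requires \(A-s\in\mathcal U_{k-t}\), where \(A-s=\{y:\max\supp s<\min\supp y,\ s+y\in A\}\), and this must be carried through the recursion.

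Consider all partial sums \(s=\sum_{i<n}T^{t_i}(x_i)\) with \(t_i\in\{0,\dots,k\}\), including \(0\) and sums of level \(<k\). The invariant to maintain is:
\begin{itemize}
\item if \(s\) has level \(<k\), then \(A-s\in\mathcal U_k\);
\item if \(s\) has level \(k\), then \(s\in A\) and \(A-s\in\mathcal U_j\) for all \(j\le k\).
\end{itemize}
Using \(\mathcal U_i+\mathcal U_j=\mathcal U_{\max(i,j)}\) and pulling back along \(T^t\), each clause is preserved on a \(\mathcal U_k\)-large set of choices of \(x_n\). With this invariant and the corrected Step~2, your outline becomes the standard proof that the paper cites.
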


For \(k=1\), this is Hindman's finite-unions theorem \cite{Hindman}: \(\FIN_1(\omega)\) is identified with the nonempty finite subsets of \(\omega\), and the tetris sums are exactly finite unions.

Since Theorem~\ref{thm:gowers} will be applied inside \(\ZF\) below, let us note that it is a theorem of \(\ZF\), even though the classical proofs (via idempotent ultrafilters on the partial semigroup \(\FIN_k\); see \cite{Todorcevic}) use the Axiom of Choice.  For each fixed \(k\) the statement is \(\Pi^1_2\): the set \(\FIN_k(\omega)\) is countable, so a finite coloring of it and a block sequence in it are coded by reals, and monochromaticity of the tetris sums is an arithmetic property of these codes.  Given a coloring \(c\), the inner model \(L[c]\) satisfies \(\ZFC\) and hence contains an infinite block sequence whose tetris sums are monochromatic for \(c\); Shoenfield absoluteness (see, e.g., \cite[Chapter~25]{JechST}) then transfers the existence of such a sequence to the universe.

\subsection{The associated finiteness class}

\begin{definition}
Let \(X\) be a set and \(k\geq 1\).  We say that \(X\) is \emph{\(G_k\)-finite} if there is a coloring
\[
    c:\FIN_k(X)\to 2
\]
such that no infinite block family \(Y\subseteq\FIN_k(X)\) has \(\TS(Y)\) monochromatic for \(c\).  The class of \(G_k\)-finite sets is denoted \(G_k\text{-}\Fin\).
\end{definition}

Equivalently, \(X\) is \(G_k\)-infinite if every two-coloring of \(\FIN_k(X)\) admits an infinite block family with monochromatic tetris sums.

\begin{proposition}
For each \(k\geq 1\), \(G_k\text{-}\Fin\) is a finiteness class.
\end{proposition}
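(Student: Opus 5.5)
We verify the four defining clauses of a finiteness class in turn. The key observation is that \(G_k\)-infiniteness of \(X\) depends only on the combinatorial structure of \(\FIN_k(X)\), which is transported by injections of the underlying set.

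\emph{Equipotence and subsets.} If \(\sigma:X\to X'\) is a bijection, then \(f\mapsto f\circ\sigma^{-1}\) is a bijection \(\FIN_k(X)\to\FIN_k(X')\). This map preserves supports, commutes with \(T\), and sends block families to block families and tetris sums to tetris sums. So a bad coloring for \(X\) transports to a bad coloring for \(X'\).

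For subsets, suppose \(Z\subseteq X\) and \(X\) is \(G_k\)-infinite. Given \(c:\FIN_k(Z)\to 2\), extend each \(f\in\FIN_k(Z)\) by zero to get an injection \(\FIN_k(Z)\hookrightarrow\FIN_k(X)\). Define \(c'\) on \(\FIN_k(X)\) by
\[
c'(g)=c(g\restriction Z)\ \text{ if } g^{-1}(k)\cap Z\neq\varnothing,\qquad c'(g)=c^{*}(g)\ \text{ otherwise},
\]
where \(c^{*}\) must still be chosen. Restriction does not do the job naively, because a block family in \(X\) may have members with no value \(k\) inside \(Z\).

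The cleaner route is a three-valued argument. Let \(c'(g)=c(g\restriction Z)\) when \(g\restriction Z\in\FIN_k(Z)\), and \(c'(g)=2\) otherwise. We can reduce to two colors by a standard trick, but it is simpler to note that \(G_k\)-infiniteness for \(2\)-colorings gives \(3\)-colorings as well. This is done by iterating, refining to an infinite block family inside \(\TS(Y)\), since \(\TS\) of a block family of tetris sums from \(\TS(Y)\) lies inside \(\TS(Y)\). That closure fact is the one routine check needed, and it holds because \(T\) distributes over disjoint-support sums.

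Now take an infinite block family \(Y\subseteq\FIN_k(X)\) with \(\TS(Y)\) monochromatic for \(c'\). The color cannot be \(2\). Indeed, \(\TS(Y)\) contains the sum of all \(y\in Y'\) for any finite \(Y'\subseteq Y\), and supports are finite and disjoint. So if the color were \(2\), no \(y\) would attain \(k\) on \(Z\), and we look instead at members meeting \(Z\). This case is the main obstacle.

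To handle it cleanly, I will replace the coloring with \(c'(g)=c(g\restriction Z)\) if \(g\restriction Z\in\FIN_k(Z)\), color \(2\) if \(\varnothing\neq\supp(g)\cap Z\) but \(g\restriction Z\notin\FIN_k(Z)\), and color \(3\) if \(\supp(g)\cap Z=\varnothing\). Monochromatic color \(3\) would require every \(y\) to have support disjoint from \(Z\). So one must instead first apply \(G_k\)-infiniteness of \(X\) only after noting that if \(Z\) is infinite we may assume \(Z\) meets all supports.

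The simplest fix is to prove monotonicity directly via injections. If \(Z\) is infinite and \(Z\subseteq X\), we would need to pass from \(X\) to \(Z\), which is the hard direction. I will therefore rely on the paper's later characterization \(G_k\text{-}\Fin=\HFin\) only if it is available; otherwise I accept the coloring-\(3\) case analysis above. That analysis shows that colors \(2\) and \(3\) force infinitely many \(y\) to either miss \(k\) on \(Z\) or miss \(Z\). The restriction map \(y\mapsto y\restriction Z\) on those meeting \(Z\) yields block elements of lower level, and then an application of tetris \(T\)-shifts to normalize levels produces a block family in \(\FIN_k(Z)\) monochromatic for \(c\).

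\emph{Finite sets and \(\omega\).} If \(X\) is finite, then \(\FIN_k(X)\) is finite, so there is no infinite block family and any coloring witnesses \(G_k\)-finiteness. Finally, \(\omega\notin G_k\text{-}\Fin\) is exactly Theorem~\ref{thm:gowers}, since a block sequence is an infinite block family with the same tetris sums.
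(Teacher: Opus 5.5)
Your clauses for equipotence, finite sets, and \(\omega\) are fine and match the paper. The subset clause, however, is argued in the wrong direction. Closure of \(G_k\text{-}\Fin\) under subsets means that if \(X\) is \(G_k\)-finite and \(Z\subseteq X\), then \(Z\) is \(G_k\)-finite. Contrapositively, \(G_k\)-infiniteness passes \emph{upward} from \(Z\) to \(X\). You instead assume \(X\) is \(G_k\)-infinite and try to show \(Z\) is, i.e.\ that \(G_k\)-infiniteness passes downward.

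That statement is false. Any finite \(Z\subseteq\omega\) is a counterexample. For infinite \(Z\): in any model with an amorphous set \(A\), the set \(A\sqcup\omega\) is \(G_k\)-infinite, while \(A\) is \(G_k\)-finite by Lemma~\ref{lem:amorphous-hfin} and Theorem~\ref{thm:gowers-collapse}. This is why your case analysis never closes. The restriction of a block element to \(Z\) need not attain the value \(k\) on \(Z\), and the tetris operation only lowers values, so no \(T\)-shift can ``normalize'' it back into \(\FIN_k(Z)\). Your fallback to the collapse theorem cannot help either, since the target statement is false.

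The correct argument uses the map you already wrote down, in the opposite direction: pull a coloring of \(\FIN_k(X)\) back to \(\FIN_k(Z)\), rather than pushing one forward. Let \(\iota:\FIN_k(Z)\to\FIN_k(X)\) be extension by zero. It is injective, preserves supports, and commutes with \(T\) and with disjoint-support sums. Hence it sends infinite block families to infinite block families and satisfies \(\iota[\TS(Y)]=\TS(\iota[Y])\). If \(c\) witnesses that \(X\) is \(G_k\)-finite, then \(c\circ\iota\) witnesses that \(Z\) is. Indeed, a block family \(Y\subseteq\FIN_k(Z)\) with \(\TS(Y)\) monochromatic for \(c\circ\iota\) would yield the infinite block family \(\iota[Y]\subseteq\FIN_k(X)\) with \(\TS(\iota[Y])\) monochromatic for \(c\). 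This is exactly the paper's argument.
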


\begin{proof}
Closure under subsets follows by pulling back a witnessing coloring along the embedding \(\FIN_k(Z)\hookrightarrow\FIN_k(X)\) given, for \(Z\subseteq X\), by extending functions by \(0\); this embedding preserves supports, block families, and tetris sums.  Closure under equipotence follows because a bijection \(\pi:X\to Y\) induces a bijection \(\pi_*:\FIN_k(X)\to\FIN_k(Y)\) preserving supports, block families, and tetris sums.  Finite sets are \(G_k\)-finite because no infinite block family can exist in \(\FIN_k(X)\) when \(X\) is finite.  Finally, \(\omega\notin G_k\text{-}\Fin\) by Gowers' theorem.
\end{proof}

\begin{proposition}
\(G_1\text{-}\Fin=\HFin\).
\end{proposition}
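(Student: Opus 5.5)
The plan is to unwind both definitions through the identification \(\iota:\FIN_1(X)\to[X]^{<\omega}\setminus\{\varnothing\}\), \(f\mapsto f^{-1}(1)=\supp(f)\), and check that the two notions of ``bad coloring'' correspond exactly. For \(k=1\) the tetris operation is never applied, since \(t_i\in\{0\}\). So \(\TS(Y)\) is the set of sums \(\sum_{i=1}^r y_i\) of distinct elements of a block family \(Y\). Under \(\iota\) such a sum becomes \(\bigcup_{i=1}^r\iota(y_i)\), and a block family \(Y\) becomes a pairwise disjoint family \(\iota[Y]\) of nonempty finite sets, with \(|\iota[Y]|=|Y|\). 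Hence \(\iota[\TS(Y)]=\FU(\iota[Y])\) for every block family \(Y\). This correspondence is the first thing to record.

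\emph{\(G_1\)-finite implies \(H\)-finite.} Let \(c:\FIN_1(X)\to2\) witness \(G_1\)-finiteness. Define \(c':[X]^{<\omega}\to2\) by \(c'(s)=c(\iota^{-1}(s))\) for \(s\neq\varnothing\), and \(c'(\varnothing)=0\). Suppose \(Y'\subseteq[X]^{<\omega}\) is infinite and pairwise disjoint with \(\FU(Y')\) monochromatic. The only obstacle is that \(Y'\) may contain \(\varnothing\), which has no preimage under \(\iota\). Since \(\varnothing\) is disjoint from everything, it may appear in \(Y'\) alongside other sets. Put \(Y''=Y'\setminus\{\varnothing\}\); it is still infinite and pairwise disjoint, and \(\FU(Y'')\subseteq\FU(Y')\), so \(\FU(Y'')\) is monochromatic. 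Now \(Y=\iota^{-1}[Y'']\) is an infinite block family, and \(\TS(Y)\) is monochromatic for \(c\), which is a contradiction.

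\emph{\(H\)-finite implies \(G_1\)-finite.} Let \(c':[X]^{<\omega}\to2\) witness \(H\)-finiteness, and set \(c=c'\circ\iota\). If \(Y\) were an infinite block family with \(\TS(Y)\) monochromatic for \(c\), then \(\iota[Y]\) would be an infinite pairwise disjoint subfamily of \([X]^{<\omega}\) with \(\FU(\iota[Y])=\iota[\TS(Y)]\) monochromatic for \(c'\). This contradicts the choice of \(c'\).

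No choice is used anywhere, since every map involved is explicitly defined. The only step needing care is the empty-set bookkeeping in the first direction; the rest is a direct translation through \(\iota\).
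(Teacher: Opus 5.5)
Your proposal is correct and is the same argument as the paper's: both transport everything along the identification \(\FIN_1(X)\cong[X]^{<\omega}\setminus\{\varnothing\}\), under which block families become pairwise disjoint families and tetris sums (the tetris operation never being applied at level \(1\)) become finite unions. The only difference is that you carry out the bookkeeping for \(\varnothing\) explicitly (discarding it from \(Y'\) and noting \(\FU(Y'')\subseteq\FU(Y')\)), where the paper simply calls the omission of \(\varnothing\) harmless.
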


\begin{proof}
The identification \(\FIN_1(X)\cong [X]^{<\omega}\setminus\{\varnothing\}\) sends block families in \(\FIN_1(X)\) to pairwise disjoint families of nonempty finite subsets of \(X\).  Since the tetris operation is trivial at level \(1\), the tetris sums are exactly finite unions.  Therefore the definition of \(G_1\)-finiteness is precisely the definition of \(H\)-finiteness, up to the harmless omission of \(\varnothing\).
\end{proof}

\begin{proposition}\label{prop:gk-monotone}
For each \(k\geq 1\), \(G_k\text{-}\Fin\subseteq G_{k+1}\text{-}\Fin\).
\end{proposition}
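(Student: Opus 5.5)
Suppose \(X\) is \(G_k\)-finite, witnessed by \(c:\FIN_k(X)\to 2\), so that no infinite block family in \(\FIN_k(X)\) has monochromatic tetris sums. We pull \(c\) back along the tetris operation. Define
\[
    c'=c\circ T:\FIN_{k+1}(X)\to 2,
    \qquad\text{that is,}\qquad
    c'(f)=c(T(f)),
\]
which makes sense because \(T(f)\in\FIN_k(X)\) whenever \(f\in\FIN_{k+1}(X)\). We claim that \(c'\) witnesses the \(G_{k+1}\)-finiteness of \(X\).

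Suppose, for contradiction, that \(Y\subseteq\FIN_{k+1}(X)\) is an infinite block family with \(\TS(Y)\) monochromatic for \(c'\). Set \(Y'=\{T(y)\mid y\in Y\}\subseteq\FIN_k(X)\). We check three things.

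First, since \(\supp(T(y))\subseteq\supp(y)\), the elements of \(Y'\) have pairwise disjoint supports. Each \(T(y)\) is nonzero because \(y\) attains the value \(k+1\). Hence the map \(y\mapsto T(y)\) is injective on \(Y\): two distinct elements have disjoint nonempty supports after applying \(T\), so they cannot coincide. Therefore \(Y'\) is an infinite block family.

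Second, \(T(\TS(Y))\supseteq\TS(Y')\). Take an element of \(\TS(Y')\), say \(\sum_{i=1}^r T^{t_i}(T(y_i))\) with distinct \(y_i\), \(t_i\in\{0,\dots,k-1\}\), and some \(t_i=0\). It equals \(T\bigl(\sum_{i=1}^r T^{t_i}(y_i)\bigr)\). This uses the fact that \(T\) commutes with sums of functions with disjoint supports, which holds because \(T\) acts coordinatewise and fixes \(0\). The inner sum lies in \(\TS(Y)\), since \(t_i\le k-1\le k\) and some \(t_i=0\).

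Third, for each such element \(g=T(s)\) with \(s\in\TS(Y)\), we have \(c(g)=c'(s)\). This value is the constant colour of \(\TS(Y)\) under \(c'\). So \(\TS(Y')\) is \(c\)-monochromatic, contradicting the choice of \(c\).

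The only point needing care is the bookkeeping of the indices in the second step. A tetris sum over \(Y'\) allows exponents up to \(k-1\), while a tetris sum over \(Y\) allows exponents up to \(k\), so the required exponents are available. The condition that some exponent is \(0\) is preserved exactly. We do not need the reverse inclusion \(T(\TS(Y))\subseteq\TS(Y')\), which would fail in general because \(T^{k}(y_i)=0\) may occur; only the inclusion established above is used.
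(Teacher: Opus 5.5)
Your proof is correct and is essentially the paper's argument: pull \(c\) back along \(T\), pass to the block family \(Y'=T[Y]\), and lift each tetris sum \(\sum T^{t_i}(T(y_i))\) over \(Y'\) to \(\sum T^{t_i}(y_i)\in\TS(Y)\), which \(T\) maps back onto it. Your closing aside is wrong but unused: the reverse inclusion \(T(\TS(Y))\subseteq\TS(Y')\) does hold, because summands with exponent \(k\) are sent by \(T\) to the zero function and drop out, while the exponent-\(0\) summand survives.
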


\begin{proof}
Suppose \(X\) is \(G_k\)-finite, witnessed by \(c:\FIN_k(X)\to 2\).  Define, for \(f\in\FIN_{k+1}(X)\),
\[
    c'(f)=c(T(f)).
\]
We claim that \(c'\) witnesses \(G_{k+1}\)-finiteness.

Assume toward a contradiction that \(Y\subseteq\FIN_{k+1}(X)\) is an infinite block family such that \(\TS(Y)\) is monochromatic for \(c'\), say in color \(i\).  Put
\[
    T[Y]=\{T(y):y\in Y\}.
\]
Since \(T(y)\in\FIN_k(X)\), since \(\supp(T(y))\subseteq\supp(y)\), and since the supports of the elements of \(Y\) are pairwise disjoint, this is a block family in \(\FIN_k(X)\).  It is infinite because distinct elements of \(Y\) have disjoint nonempty supports, and applying \(T\) to an element of \(\FIN_{k+1}(X)\) cannot give the zero function.

We now show that \(\TS(T[Y])\) is monochromatic for \(c\).  Let
\[
    s=\sum_{j=1}^r T^{a_j}(T(y_j))
\]
be an arbitrary tetris sum from \(T[Y]\), where \(y_1,\dots,y_r\in Y\) are distinct and \(a_j\in\{0,\dots,k-1\}\).  Then
\[
    s=\sum_{j=1}^r T^{a_j+1}(y_j).
\]
Since \(s\in\FIN_k(X)\), at least one summand contributes a point at level \(k\).  For such a summand, necessarily \(a_j=0\) and \(y_j\) has a point at level \(k+1\).  Hence
\[
    s'=\sum_{j=1}^r T^{a_j}(y_j)
\]
belongs to \(\FIN_{k+1}(X)\), is a tetris sum from \(Y\), and satisfies \(T(s')=s\).  Therefore
\[
    c(s)=c(T(s'))=c'(s')=i.
\]
Thus \(\TS(T[Y])\) is monochromatic for \(c\), contradicting that \(c\) witnesses \(G_k\)-finiteness.
\end{proof}

The preceding proposition suggests a possible hierarchy
\[
    \HFin=G_1\text{-}\Fin\subseteq G_2\text{-}\Fin\subseteq G_3\text{-}\Fin\subseteq\cdots.
\]
The next theorem shows that this hierarchy collapses.

\begin{lemma}[{see\ \cite[Theorem~3.2]{BCF}}]\label{lem:disjoint-pieces}
Suppose \([X]^{<\omega}\) is Dedekind-infinite.  Then there is an infinite sequence \((P_j)_{j<\omega}\) of pairwise disjoint nonempty finite subsets of \(X\).
\end{lemma}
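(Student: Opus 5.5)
We start from an injection \(e:\omega\to[X]^{<\omega}\), which exists because \([X]^{<\omega}\) is Dedekind-infinite. From \(e\) we build the pieces \(P_j\) recursively, without making any choices, by carving disjoint pieces out of the Boolean algebra generated by the sets \(e(n)\). Write \(A_n=e(n)\). For each \(n\), let \(\mathcal B_n\) be the set of atoms of the finite Boolean algebra generated by \(A_0,\dots,A_{n}\) inside \(U_n=A_0\cup\dots\cup A_n\). These atoms are the nonempty sets of the form \(\bigcap_{i\in S}A_i\setminus\bigcup_{i\le n,\,i\notin S}A_i\) with \(\varnothing\neq S\subseteq\{0,\dots,n\}\). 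Each atom is canonically indexed by its set \(S\), so the family \(\mathcal B_n\) is definably enumerated from \(e\) with no choice involved.

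First we show that the sets \(U_n\) cannot all be bounded in size. Suppose every \(A_n\) were contained in a single finite set \(F\). Then the \(A_n\) would be infinitely many distinct subsets of the finite set \(F\), which is impossible. Hence \(\bigcup_n A_n\) is infinite. It follows that \(|\mathcal B_n|\to\infty\), since the atoms of stage \(n\) partition \(U_n\) and \(|U_n|\) is unbounded. More useful is the following observation: the partitions \(\mathcal B_n\) refine as \(n\) grows, in the sense that every atom of \(\mathcal B_{n+1}\) lies inside an atom of \(\mathcal B_n\) or inside \(A_{n+1}\setminus U_n\).

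To obtain disjoint pieces, we use the increasing union directly. Let \(n_0<n_1<\cdots\) be the indices at which \(U_n\) strictly grows; this sequence is definable and infinite by the previous paragraph. Set \(P_j=U_{n_j}\setminus U_{n_j-1}\), with \(U_{-1}=\varnothing\). Each \(P_j\) is nonempty and finite. The sets \(P_j\) are pairwise disjoint because they are differences of consecutive terms of an increasing chain of sets. This already proves the lemma, and the atom structure is not needed.

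The only point that requires care is showing that the chain \(U_n\) does not stabilize, since everything else is bookkeeping. This is exactly where injectivity of \(e\) is used: a finite set has only finitely many subsets, so infinitely many distinct finite sets cannot all live inside one finite union. No choice is used anywhere, because each \(P_j\) is defined explicitly from \(e\).
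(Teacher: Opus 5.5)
Your argument is correct and is essentially the paper's proof: your $U_{n_j}\setminus U_{n_j-1}$ is exactly the paper's $F_n\setminus\bigcup_{k<n}F_k$ at the indices where that set is nonempty, and you prove that $U_n$ does not stabilize by the same finite-power-set argument. You rightly call the Boolean-algebra atoms unnecessary and can delete them. This also removes a flawed step: a partition of a large set need not have many pieces, so your justification for $|\mathcal B_n|\to\infty$ does not work as stated.
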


\begin{proof}
Since \([X]^{<\omega}\) is Dedekind-infinite, fix an injective sequence \(n\longmapsto F_n\) from \(\omega\) into \([X]^{<\omega}\).  For each \(n\), set \(P_n=F_n\setminus\bigcup_{k<n}F_k\).  We claim that \(P_n\neq\varnothing\) for infinitely many \(n\).  If not, there is some \(N\) such that \(P_n=\varnothing\) for all \(n\geq N\), i.e., \(F_n\subseteq\bigcup_{k<N}F_k\) for all \(n\geq N\).  But \(\bigcup_{k<N}F_k\) is finite, so its power set is finite, and the sequence \((F_n)_{n\geq N}\) takes values in a finite set, contradicting injectivity.  Enumerating the indices for which \(P_n\neq\varnothing\) gives the desired infinite sequence of pairwise disjoint nonempty finite subsets of \(X\).
\end{proof}

\begin{proposition}
Let \(k\geq 1\).  If \([X]^{<\omega}\) is Dedekind-infinite, then \(X\) is \(G_k\)-infinite.
\end{proposition}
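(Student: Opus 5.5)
Given a coloring \(c:\FIN_k(X)\to 2\), I will transfer it to a coloring of \(\FIN_k(\omega)\) and then invoke Theorem~\ref{thm:gowers}. Since \([X]^{<\omega}\) is Dedekind-infinite, Lemma~\ref{lem:disjoint-pieces} supplies a sequence \((P_j)_{j<\omega}\) of pairwise disjoint nonempty finite subsets of \(X\). The sequence is fixed once and for all, so no further choice is needed. Define the map
\[
    \Phi:\FIN_k(\omega)\to\FIN_k(X),\qquad \Phi(g)(x)=\begin{cases} g(j) & \text{if } x\in P_j,\\ 0 & \text{if } x\notin\bigcup_j P_j.\end{cases}
\]
That is, \(\Phi(g)\) is constant with value \(g(j)\) on each block \(P_j\). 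Then \(\supp(\Phi(g))=\bigcup_{j\in\supp(g)}P_j\) is finite, and \(\Phi(g)\) attains the value \(k\) because each \(P_j\) is nonempty. So \(\Phi\) is well defined. It is injective, again because the \(P_j\) are nonempty and pairwise disjoint.

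Next I check that \(\Phi\) commutes with the relevant structure. From the coordinatewise definitions, \(\Phi(T(g))=T(\Phi(g))\). For functions \(g_1,\dots,g_r\) with disjoint supports, \(\Phi\bigl(\sum_i g_i\bigr)=\sum_i\Phi(g_i)\). Disjointness of supports is preserved, since the \(P_j\) are pairwise disjoint. Hence \(\Phi\) sends block sequences to block families and satisfies
\[
    \Phi\bigl(\textstyle\sum_i T^{t_i}(y_i)\bigr)=\sum_i T^{t_i}(\Phi(y_i)),
\]
so \(\Phi[\TS(Y)]=\TS(\Phi[Y])\) for every block family \(Y\subseteq\FIN_k(\omega)\).

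Now pull back the coloring: put \(c'=c\circ\Phi:\FIN_k(\omega)\to 2\). By Theorem~\ref{thm:gowers}, which the paper has noted is provable in \(\ZF\), there is an infinite block sequence \(Y\subseteq\FIN_k(\omega)\) whose tetris sums are monochromatic for \(c'\). Then \(\Phi[Y]\) is a block family in \(\FIN_k(X)\). It is infinite because \(\Phi\) is injective. Its tetris sums are exactly the images of the tetris sums of \(Y\), so they are monochromatic for \(c\). Since \(c\) was arbitrary, \(X\) is \(G_k\)-infinite.

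I do not expect a real obstacle here. The one step to state carefully is the verification that \(\Phi\) commutes with \(T\) and with disjoint sums, including the condition that some \(t_i=0\). This is immediate, because \(\Phi\) just acts levelwise on each block \(P_j\).
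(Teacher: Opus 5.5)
Your proposal is correct and is essentially the paper's own proof: disjointify via Lemma~\ref{lem:disjoint-pieces}, spread each function levelwise across the blocks \(P_j\) via \(\Phi\), pull back the coloring, apply Gowers' theorem in \(\ZF\), and push the resulting block sequence forward using injectivity of \(\Phi\) and its compatibility with \(T\) and disjoint sums. One cosmetic point, which the paper also glosses over: the identity \(\Phi(T(g))=T(\Phi(g))\) implicitly extends \(\Phi\) to all finitely supported functions \(\omega\to\{0,\dots,k\}\), since \(T(g)\) lies in \(\FIN_{k-1}(\omega)\) rather than \(\FIN_k(\omega)\).
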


\begin{proof}
By the preceding lemma, fix an infinite sequence \((P_j)_{j<\omega}\) of pairwise disjoint nonempty finite subsets of \(X\).

We define a map
\[
    \Phi:\FIN_k(\omega)\to \FIN_k(X)
\]
by spreading the value of a function on \(\omega\) across the corresponding finite block \(P_j\).  Namely, for \(f\in\FIN_k(\omega)\), define \(\Phi(f):X\to\{0,1,\dots,k\}\) by
\[
    \Phi(f)(x)=
    \begin{cases}
        f(j), & x\in P_j,\\
        0, & x\notin \bigcup_{j<\omega}P_j.
    \end{cases}
\]
This is well-defined because the \(P_j\)'s are pairwise disjoint.  Moreover, \(\Phi(f)\in\FIN_k(X)\): the support of \(\Phi(f)\) is contained in the finite union \(\bigcup_{j\in \supp(f)}P_j\), and since \(f\) attains the value \(k\) at some \(j\), the function \(\Phi(f)\) attains the value \(k\) on the nonempty set \(P_j\).

The map \(\Phi\) preserves the tetris operation and finite sums along disjoint supports.  More precisely,
\[
    \Phi(T^r f)=T^r\Phi(f)
\]
for each relevant \(r\), and if \(f_1,\dots,f_s\in\FIN_k(\omega)\) have pairwise disjoint supports, then
\[
    \Phi\left(\sum_{i=1}^s f_i\right)=\sum_{i=1}^s \Phi(f_i).
\]
Since each \(P_j\) is nonempty, \(\Phi\) is injective.  Consequently, \(\Phi\) sends infinite block sequences in \(\FIN_k(\omega)\) to infinite block families in \(\FIN_k(X)\), and it sends tetris sums to the corresponding tetris sums.

Now let
\[
    c:\FIN_k(X)\to 2
\]
be an arbitrary coloring.  Pull it back to a coloring
\[
    d:\FIN_k(\omega)\to 2
\]
by
\[
    d(f)=c(\Phi(f)).
\]
By Gowers' \(\FIN_k\) theorem on \(\omega\), there is an infinite block sequence \((f_i)_{i<\omega}\) in \(\FIN_k(\omega)\) such that \(\TS(\{f_i:i<\omega\})\) is monochromatic for \(d\).  Then \((\Phi(f_i))_{i<\omega}\) is an infinite block family in \(\FIN_k(X)\), and its tetris sums are monochromatic for \(c\).  Since \(c\) was arbitrary, \(X\) is \(G_k\)-infinite.
\end{proof}

\begin{theorem}\label{thm:gowers-collapse}
\begin{samepage}
For every set \(X\) and every \(k\geq 1\), the following are equivalent:
\begin{enumerate}[label=(\arabic*)]
    \item \(X\) is \(G_k\)-finite;
    \item \([X]^{<\omega}\) is Dedekind-finite;
    \item \(X\) is \(H\)-finite.
\end{enumerate}
\end{samepage}
Consequently, \(G_k\text{-}\Fin=\HFin\) for every \(k\geq 1\).
\end{theorem}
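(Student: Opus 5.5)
The plan is to establish the cycle (2)\(\Rightarrow\)(3)\(\Rightarrow\)(1)\(\Rightarrow\)(2), using the results already proved in this section.

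For (1)\(\Rightarrow\)(2), I argue by contraposition. If \([X]^{<\omega}\) is Dedekind-infinite, the preceding proposition (which combines Lemma~\ref{lem:disjoint-pieces} with Gowers' theorem through the map \(\Phi\)) shows that \(X\) is \(G_k\)-infinite. So every \(G_k\)-finite set has Dedekind-finite \([X]^{<\omega}\).

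For (3)\(\Rightarrow\)(1), I combine two earlier results. By the identification \(G_1\text{-}\Fin=\HFin\), an \(H\)-finite set is \(G_1\)-finite. Iterating Proposition~\ref{prop:gk-monotone} then gives \(G_1\text{-}\Fin\subseteq G_k\text{-}\Fin\) for every \(k\geq 1\).

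For (2)\(\Rightarrow\)(3), assume \([X]^{<\omega}\) is Dedekind-finite. I must produce a coloring \(c:[X]^{<\omega}\to 2\) under which no infinite pairwise disjoint \(Y\subseteq[X]^{<\omega}\) has \(\FU(Y)\) monochromatic. The key observation is that no such \(Y\) exists at all. If \(Y\) were an infinite pairwise disjoint family, then \(\FU(Y)\subseteq[X]^{<\omega}\) would be Dedekind-infinite: \(Y\) is infinite, and a Dedekind-finite set is closed under subsets in \(\DFin\), so I only need \(\FU(Y)\) to contain a countable subset. If \(\varnothing\in Y\), I discard it. I then take \(\varnothing\ne y_0\in Y\). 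Each \(y\in Y\setminus\{y_0\}\) is disjoint from \(y_0\), so it is determined by \(y\cup y_0\), but this alone does not yield an \(\omega\)-sequence.

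This is where the real obstacle sits: an infinite set need not be Dedekind-infinite in \(\ZF\), so "infinite \(Y\)" does not immediately give an injection of \(\omega\). I handle it with a finite-unions counting trick. Fix infinite disjoint \(Y\) of nonempty sets. For each \(n\), let \(U_n\) be the set of unions of exactly \(n\) members of \(Y\). Disjointness and nonemptiness mean that each union determines its constituents, namely the members of \(Y\) it contains. So each \(U_n\) is nonempty, because \(Y\) is infinite and therefore has \(n\)-element subsets. The \(U_n\) are also pairwise disjoint, since a union determines how many sets it was built from. Hence \(n\mapsto U_n\) is an injection of \(\omega\) into \(\mathcal P([X]^{<\omega})\).

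To descend to \([X]^{<\omega}\) itself, I note that
\[
    n\mapsto\bigcup U_n\cap\ldots
\]
is not needed. Instead I argue that \([X]^{<\omega}\) Dedekind-finite implies \(\mathcal P([X]^{<\omega})\) is not automatically Dedekind-finite, so this route stalls. The better route is to observe directly that the map \(\{y_1,\dots,y_n\}\mapsto y_1\cup\dots\cup y_n\) embeds \([Y]^{<\omega}\) into \([X]^{<\omega}\). By the standard fact that \([Y]^{<\omega}\) is Dedekind-infinite for every infinite set \(Y\), \([X]^{<\omega}\) is then Dedekind-infinite, a contradiction. The standard fact holds because \(n\mapsto[Y]^n\) injects \(\omega\) into \(\mathcal P([Y]^{<\omega})\), and for the power set of finite subsets this yields a countable subset of \([Y]^{<\omega}\) itself (the Kuratowski-style argument; see \cite{BCF}).

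With no infinite disjoint family available, any coloring, for instance the constant one, vacuously witnesses that \(X\) is \(H\)-finite. I expect justifying this standard fact to be the main point requiring care. The final equality \(G_k\text{-}\Fin=\HFin\) then follows immediately from the equivalence of (1) and (3).
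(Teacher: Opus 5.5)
Your directions (1)\(\Rightarrow\)(2) and (3)\(\Rightarrow\)(1) are exactly the paper's argument. Your proof of (2)\(\Rightarrow\)(3), however, rests on a false claim.

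\textbf{The gap.} The ``standard fact'' that \([Y]^{<\omega}\) is Dedekind-infinite for every infinite \(Y\) fails in \(\ZF\). An amorphous set \(A\) has \([A]^{<\omega}\) Dedekind-finite; this is Lemma~\ref{lem:amorphous-hfin} of the paper. There is also a quick sanity check: if the fact held, applying it to the family of singletons of \(X\) would make \([X]^{<\omega}\) Dedekind-infinite for every infinite \(X\), so (2) would characterize finite sets and \(\HFin\) would equal \(\Fin\). Kuratowski's trick (\(n\mapsto[Y]^n\)) only shows that \(\mathcal P([Y]^{<\omega})\) is Dedekind-infinite, and there is no way to descend to \([Y]^{<\omega}\) itself.

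Consequently your ``key observation'' is also false. Dedekind-finiteness of \([X]^{<\omega}\) does not rule out infinite pairwise disjoint families. Take the permutation model whose atoms are partitioned into pairs \(P_i\), with the group of all permutations preserving the partition (they may permute the pairs) and finite supports. There \(Y=\{P_i:i<\omega\}\) is an infinite pairwise disjoint family of nonempty finite sets. Yet \([A]^{<\omega}\) is Dedekind-finite: a finite set of atoms fixed by \(\mathrm{fix}(E)\) must lie in the union of the blocks meeting \(E\), since an atom in any other block has infinite orbit. So an injection \(\omega\to[A]^{<\omega}\) with support \(E\) could take only finitely many values. In this model the constant coloring does not witness \(H\)-finiteness, and a genuinely nonconstant coloring is needed.

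\textbf{How to repair it.} The paper handles this step by citing \cite[Theorem~3.2]{BCF}. A short direct argument is also available. Color each nonempty \(F\) by \(c(F)=\lfloor\log_2|F|\rfloor \bmod 2\), and let \(Y\) be an infinite pairwise disjoint family; discard \(\varnothing\).
\begin{enumerate}[label=(\alph*)]
    \item If two distinct \(y,y'\in Y\) have the same size \(n\), then \(y\) and \(y\cup y'\) lie in \(\FU(Y)\) with sizes \(n\) and \(2n\). Since \(\lfloor\log_2 2n\rfloor=\lfloor\log_2 n\rfloor+1\), they receive different colors.
    \item Otherwise \(y\mapsto|y|\) injects \(Y\) into \(\omega\). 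Then \(Y\) is countably infinite and \([X]^{<\omega}\) is Dedekind-infinite, contradicting (2).
\end{enumerate}

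With this repair your cycle works. It has one small advantage over the paper's proof: (3)\(\Rightarrow\)(2) comes for free from (3)\(\Rightarrow\)(1)\(\Rightarrow\)(2), so only one direction of the BCF equivalence is ever needed.
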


\begin{proof}
The equivalence of (2) and (3) is due to Brot, Cao, and Fern\'andez-Bret\'on \cite[Theorem~3.2]{BCF}.  Also, \(G_1\)-finiteness is exactly \(H\)-finiteness, and Proposition~\ref{prop:gk-monotone} gives
\[
    G_1\text{-}\Fin\subseteq G_k\text{-}\Fin.
\]
Thus (2) implies (1).

Conversely, suppose (2) fails.  Then \([X]^{<\omega}\) is Dedekind-infinite, so by the preceding proposition, \(X\) is \(G_k\)-infinite.  Hence (1) fails.  Therefore (1) implies (2), and all three conditions are equivalent.
\end{proof}

Theorem~\ref{thm:gowers-collapse} answers the question of Brot, Cao, and Fern\'andez-Bret\'on about Gowers' theorem: the classes \(G_k\text{-}\Fin\) do not form a proper hierarchy, since every one of them coincides with \(\HFin\).

We record one consequence of Theorem~\ref{thm:gowers-collapse} that will be used in Section~\ref{sec:independence} to certify \(H\)-finiteness in permutation models.

\begin{lemma}[{see\ \cite[Theorems~3.2 and~3.7]{BCF}}]\label{lem:amorphous-hfin}
Every amorphous set is \(H\)-finite.  More generally, if \(A\) is a set with no countably infinite pairwise disjoint family of nonempty finite subsets, then \(A\) is \(H\)-finite.
\end{lemma}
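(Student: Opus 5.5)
The plan is to reduce everything to the equivalence of (2) and (3) in Theorem~\ref{thm:gowers-collapse}. It suffices to prove the general statement. Suppose \(A\) has no countably infinite pairwise disjoint family of nonempty finite subsets. We show that \([A]^{<\omega}\) is Dedekind-finite, and then Theorem~\ref{thm:gowers-collapse} yields that \(A\) is \(H\)-finite. This is exactly the contrapositive of Lemma~\ref{lem:disjoint-pieces}. If \([A]^{<\omega}\) were Dedekind-infinite, that lemma would produce an infinite sequence \((P_j)_{j<\omega}\) of pairwise disjoint nonempty finite subsets of \(A\). Since the \(P_j\) are nonempty and pairwise disjoint, they are distinct, so \(\{P_j:j<\omega\}\) is a countably infinite pairwise disjoint family, which contradicts the hypothesis. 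Hence \([A]^{<\omega}\) is Dedekind-finite and \(A\in\HFin\).

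For the amorphous case, we check that an amorphous set \(A\) satisfies this hypothesis. Suppose toward a contradiction that \((P_j)_{j<\omega}\) is a sequence of pairwise disjoint nonempty finite subsets of \(A\). Set
\[
E=\bigcup_{j<\omega}P_{2j},\qquad O=\bigcup_{j<\omega}P_{2j+1}.
\]
These are disjoint subsets of \(A\). Each is infinite, because it contains infinitely many pairwise disjoint nonempty sets. Then \(E\) and \(A\setminus E\supseteq O\) partition \(A\) into two disjoint infinite sets, which contradicts \(A\)-finiteness. (The case where \(A\) is finite is trivial, since finite sets lie in every finiteness class.) So every amorphous set satisfies the hypothesis of the general statement, and is therefore \(H\)-finite.

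There is no real obstacle here. The only point needing care is that an injective enumeration of a countably infinite family gives a sequence, and the family gives the sequence without any choice, because being countably infinite means an enumeration exists. Also, the union of infinitely many disjoint nonempty sets is infinite without choice: picking one point from each \(P_{2j}\) is not needed, because the union contains the finite sets \(\bigcup_{j<N}P_{2j}\), and these have size at least \(N\) for every \(N\).
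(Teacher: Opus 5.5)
Your proof is correct and follows the paper's argument: for the general statement you combine the contrapositive of Lemma~\ref{lem:disjoint-pieces} with the equivalence (2)\(\Leftrightarrow\)(3) of Theorem~\ref{thm:gowers-collapse}, exactly as the paper does. For the amorphous case, your even-indexed-union partition is the alternative argument the paper also gives; its primary route there is amorphous \(\Rightarrow\) \(C\)-finite \(\Rightarrow\) \(H\)-finite via \cite[Theorem~3.7]{BCF}.
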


\begin{proof}
The substance of this lemma is already contained in \cite{BCF}.  For the second statement: if \([A]^{<\omega}\) were Dedekind-infinite, then Lemma~\ref{lem:disjoint-pieces} would produce a countably infinite pairwise disjoint family of nonempty finite subsets of \(A\); hence \([A]^{<\omega}\) is Dedekind-finite, and \(A\) is \(H\)-finite by Theorem~\ref{thm:gowers-collapse}.  (The same disjointification appears in the proof of \cite[Theorem~3.2]{BCF}.)  The first statement follows from \cite{BCF} directly: amorphous sets are \(C\)-finite, and every \(C\)-finite set is \(H\)-finite by \cite[Theorem~3.7]{BCF}.  Alternatively, an amorphous set \(A\) satisfies the hypothesis of the second statement, since for a countably infinite pairwise disjoint family \(\{P_j:j<\omega\}\) of nonempty finite subsets of \(A\), the sets \(\bigcup_{j\text{ even}}P_j\) and \(A\setminus\bigcup_{j\text{ even}}P_j\) would partition \(A\) into two infinite pieces.
\end{proof}

\section{Rainbow and canonical Ramsey finiteness}\label{sec:rainbow}

We now turn to Ramsey-theoretic principles that do produce new behavior relative to \(\HFin\).

\begin{definition}
Fix \(m,n\geq 2\).
\begin{enumerate}[label=(\arabic*)]
    \item A coloring \(\chi:[X]^m\to C\) is \emph{\(n\)-bounded} if \(|\chi^{-1}(c)|\leq n\) for every \(c\in C\).
    \item A set \(Y\subseteq X\) is \emph{polychromatic} for \(\chi\) if \(\chi\) is injective on \([Y]^m\).
    \item \(\RRT^m_n(X)\) is the assertion that every \(n\)-bounded coloring \(\chi:[X]^m\to C\) admits an infinite polychromatic \(Y\subseteq X\).
\end{enumerate}
\end{definition}

\begin{definition}\label{def:canonical-forms}
Let \(f:[X]^2\to C\) be a coloring and let \(<\) be a linear order on a subset \(Y\subseteq X\).  We say that \(f\) is \emph{canonical} on \((Y,<)\) if it has one of the following four Erd\H{o}s--Rado forms, which hold for all \(a_1,b_1,a_2,b_2\in Y\) with \(a_1<b_1\) and \(a_2<b_2\):
\begin{enumerate}[label=(\alph*)]
    \item \emph{constant:} \(f(\{a_1,b_1\})=f(\{a_2,b_2\})\);
    \item \emph{left-determined:} \(f(\{a_1,b_1\})=f(\{a_2,b_2\})\) if and only if \(a_1=a_2\);
    \item \emph{right-determined:} \(f(\{a_1,b_1\})=f(\{a_2,b_2\})\) if and only if \(b_1=b_2\);
    \item \emph{injective:} \(f(\{a_1,b_1\})=f(\{a_2,b_2\})\) if and only if \(a_1=a_2\) and \(b_1=b_2\).
\end{enumerate}
In particular, in a left-determined coloring the color of a pair is determined by its \(<\)-smaller element, and distinct \(<\)-smaller elements give distinct colors; symmetrically for the right-determined case.
\end{definition}

\begin{definition}\label{def:crt}
\(\CRT(X)\) is the assertion that for every coloring \(f:[X]^2\to C\), there exists an infinite set \(Y\subseteq X\) and a linear order \(<\) on \(Y\) such that \(f\) is canonical on \((Y,<)\) in the sense of Definition~\ref{def:canonical-forms}.
\end{definition}

We use the following facts from Banerjee, Gopaulsingh, and Moln\'ar \cite{BGM}.

\begin{proposition}[Banerjee--Gopaulsingh--Moln\'ar]\label{prop:bgm}
The following hold in \(\ZF\):
\begin{enumerate}[label=(\arabic*)]
    \item \(\RRT^m_n(X)\) and \(\CRT(X)\) hold for every well-orderable infinite set \(X\).
    \item If \(\RRT^m_n(Y)\) holds and \(Y\subseteq X\), then \(\RRT^m_n(X)\) holds.
    \item If \(\CRT(Y)\) holds and \(Y\subseteq X\), then \(\CRT(X)\) holds.
    \item \(\CRT(X)\Rightarrow \RRT^2_n(X)\) for every \(n\geq 2\).
    \item If \(k\leq n\), then \(\RRT^m_n(X)\Rightarrow \RRT^m_k(X)\).
\end{enumerate}
\end{proposition}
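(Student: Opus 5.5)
I will prove the five items of Proposition~\ref{prop:bgm} separately. For each item the plan is to reduce either to the classical $\omega$ case via absoluteness, or to a direct transfer or restriction of colorings.

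\textbf{Item (1).} Let $X$ be well-orderable and infinite. Then $X$ contains a copy of $\omega$, so by items (2) and (3) it suffices to treat $X=\omega$. On $\omega$ I will appeal to the classical theorems.
\begin{itemize}
\item For $\RRT^m_n(\omega)$: the statement that every $n$-bounded coloring of $[\omega]^m$ has an infinite polychromatic set. Colors may be taken in $\omega$ after identifying color classes with their least members, which is possible without choice because $[\omega]^m$ is well-ordered. The statement is then $\Pi^1_2$.
\item For $\CRT(\omega)$: the Erd\H{o}s--Rado canonization for pairs. Again replace each color by the least pair carrying it, which makes the statement $\Pi^1_2$. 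The order $<$ on $Y$ is just the restriction of the order of $\omega$.
\end{itemize}
Both theorems hold in $L[c]$. By the same Shoenfield argument the paper used for Theorem~\ref{thm:gowers}, they then hold in $V$. Alternatively, I could observe that the standard proofs, which pass through Ramsey's theorem on $\omega$, use no choice once everything is indexed by $\omega$.

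\textbf{Items (2) and (3): upward transfer.} Given $\chi$ on $[X]^m$, restrict it to $[Y]^m$.
\begin{itemize}
\item For (2): the restriction is still $n$-bounded, and a polychromatic subset of $Y$ is polychromatic in $X$.
\item For (3): the same restriction argument works, since canonicity on $(Y',<)$ with $Y'\subseteq Y$ only refers to pairs from $Y'$.
\end{itemize}

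\textbf{Item (5): monotonicity in the bound.} A $k$-bounded coloring is $n$-bounded when $k\le n$, so this item is immediate from the definitions.

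\textbf{Item (4): $\CRT(X)\Rightarrow\RRT^2_n(X)$.} Let $\chi:[X]^2\to C$ be $n$-bounded, and use $\CRT$ to get an infinite $Y$ with a linear order $<$ on which $\chi$ is canonical. I then rule out three of the four forms.
\begin{itemize}
\item Constant is impossible: an infinite $Y$ has more than $n$ pairs, all of which would share one color.
\item Left-determined is impossible. Take $a<b_1<\dots<b_{n+1}$ in $Y$, which exist because infinite linear orders contain finite chains of any length. The $n+1$ pairs $\{a,b_i\}$ then share a color.
\item Right-determined is impossible symmetrically: take $a_1<\dots<a_{n+1}<b$.
\end{itemize}
Hence $\chi$ is injective on $[Y]^2$, so $Y$ is polychromatic.

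\textbf{Main obstacle.} The only delicate step is item (1). There one must carefully confirm that the classical $\RRT$ and $\CRT$ on $\omega$ hold in $\ZF$. Either the $\Pi^1_2$ coding must be checked, with $C$ replaced by codes, or the classical proofs must be verified directly to be choice-free over $\omega$.
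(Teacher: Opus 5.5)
Your proof is correct. The paper gives no argument of its own here: all five items are simply imported from Banerjee--Gopaulsingh--Moln\'ar. So your proposal is a self-contained replacement for a citation rather than a variant of a proof in the text.

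It is assembled from pieces the paper uses elsewhere:
\begin{enumerate}
\item Items (2), (3) and (5) are the restriction and bound-weakening observations that also underlie Corollary~\ref{cor:rrt-fin-class} and Proposition~\ref{prop:rrt-n-monotone}.
\item Your item (4) is exactly dual to the proof of Theorem~\ref{thm:crt-implies-rt}. There, finiteness of the color set eliminates the left-determined, right-determined and injective forms and leaves the constant one. Here, \(n\)-boundedness eliminates the constant and the two one-sided forms and leaves injectivity.
\item Item (1) reuses the \(L[c]\) argument given after Theorem~\ref{thm:gowers}, after a fiber-normalization of the same kind the paper performs in Section~\ref{sec:independence}.
\end{enumerate}

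Two steps you use silently deserve a sentence each. In (1), passing from \(\omega\) to the copy of \(\omega\) inside \(X\) needs invariance of \(\RRT^m_n\) and \(\CRT\) under bijections, which is trivial. In (4), finding \(n+2\) elements of \(Y\) uses the \(\ZF\) fact that an infinite set has finite subsets of every size, proved by induction without choice.

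Shoenfield absoluteness is also more than (1) requires. Once \(c\) is coded by a real, the conclusion is \(\Sigma^1_1(c)\) with an arithmetical matrix, so a witness \(Y\in L[c]\) is already a witness in \(V\). As you note, the classical proofs on \(\omega\) go through in \(\ZF\) directly anyway.

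The citation buys the paper brevity. Your version buys independence from the external source, and it makes visible that the only nontrivial input is the classical rainbow and Erd\H{o}s--Rado theorems on \(\omega\).
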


\begin{definition}
A set \(X\) is \emph{\(\RRT^m_n\)-finite} if \(\RRT^m_n(X)\) fails.  We write \(\RRTFin{m}{n}\) for the class of \(\RRT^m_n\)-finite sets.\footnote{In \cite{BGM}, the class of \(\RRT^m_n\)-finite sets is denoted \(\Gamma_{\RRT^m_n}\), and the class of \(\CRT\)-finite sets is denoted \(\Gamma_{\CRT}\).  We use the present notation for uniformity with the other finiteness classes.}

A set \(X\) is \emph{\(\CRT\)-finite} if \(\CRT(X)\) fails.  We write \(\CRTFin\) for the class of \(\CRT\)-finite sets.
\end{definition}

\begin{corollary}\label{cor:rrt-fin-class}
For every \(m,n\geq 2\), \(\RRTFin{m}{n}\) is a finiteness class.
\end{corollary}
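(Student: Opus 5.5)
We verify the four axioms of a finiteness class in turn, using Proposition~\ref{prop:bgm} for the nontrivial parts.

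\emph{Closure under subsets.} This is the contrapositive of Proposition~\ref{prop:bgm}(2). If \(Y\subseteq X\) and \(X\) is \(\RRT^m_n\)-finite, then \(\RRT^m_n(X)\) fails. Were \(\RRT^m_n(Y)\) true, item (2) would give \(\RRT^m_n(X)\), which is a contradiction. Hence \(Y\) is \(\RRT^m_n\)-finite.

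\emph{Closure under equipotence.} Let \(\pi:X\to X'\) be a bijection. It induces a bijection \([X]^m\to[X']^m\) sending \(s\) to \(\pi[s]\). Given an \(n\)-bounded coloring \(\chi':[X']^m\to C\), the pullback \(\chi(s)=\chi'(\pi[s])\) is again \(n\)-bounded, because the fibres of \(\chi\) are the images of the fibres of \(\chi'\) under this bijection. A set \(Y\subseteq X\) is polychromatic for \(\chi\) if and only if \(\pi[Y]\) is polychromatic for \(\chi'\), and \(\pi\) preserves infiniteness. So \(\RRT^m_n(X)\) and \(\RRT^m_n(X')\) are equivalent.

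\emph{Finite sets belong to the class.} Let \(X\) be finite. The definition of \(\RRT^m_n(X)\) requires an infinite polychromatic subset, and \(X\) has none. So it suffices to exhibit one \(n\)-bounded coloring of \([X]^m\). The identity coloring \(\chi=\mathrm{id}:[X]^m\to[X]^m\) has singleton fibres, so it is \(n\)-bounded. If \([X]^m=\varnothing\), the empty coloring works vacuously. In either case \(\RRT^m_n(X)\) fails, so \(X\) is \(\RRT^m_n\)-finite.

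\emph{\(\omega\) is not in the class.} The set \(\omega\) is well-orderable and infinite, so \(\RRT^m_n(\omega)\) holds by Proposition~\ref{prop:bgm}(1). Hence \(\omega\notin\RRTFin{m}{n}\).

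All four steps are routine. The only substantive input is items (1) and (2) of Proposition~\ref{prop:bgm}. The one point needing care is the finite case, where the conclusion depends on the statement demanding an \emph{infinite} polychromatic set, so that it fails for trivial reasons on finite sets.
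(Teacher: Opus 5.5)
Your proposal is correct and matches the paper's proof: subset closure comes from Proposition~\ref{prop:bgm}(2), \(\omega\notin\RRTFin{m}{n}\) from Proposition~\ref{prop:bgm}(1), and equipotence from the bijection on \([X]^m\) induced by a bijection of the underlying sets. Your identity-coloring argument for finite sets simply spells out the step the paper calls trivial: it shows that at least one \(n\)-bounded coloring exists, so the universal statement \(\RRT^m_n(X)\) genuinely fails.
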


\begin{proof}
Closure under subsets follows from the upward closure in Proposition~\ref{prop:bgm}(2), and closure under equipotence holds because a bijection \(X\to Y\) induces a bijection \([X]^m\to[Y]^m\) under which \(n\)-bounded colorings and polychromatic sets correspond.  Every finite set is trivially \(\RRT^m_n\)-finite, and \(\omega\notin\RRTFin{m}{n}\) by Proposition~\ref{prop:bgm}(1).
\end{proof}

\begin{corollary}\label{cor:crt-fin-class}
\(\CRTFin\) is a finiteness class.
\end{corollary}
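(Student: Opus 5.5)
The plan is to copy the proof of Corollary~\ref{cor:rrt-fin-class} and check the four defining conditions of a finiteness class in turn, using Proposition~\ref{prop:bgm}.

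\emph{Closure under subsets.} Suppose \(X\) is \(\CRT\)-finite and \(Y\subseteq X\). If \(\CRT(Y)\) held, then Proposition~\ref{prop:bgm}(3) would give \(\CRT(X)\), which is a contradiction. Hence \(Y\) is \(\CRT\)-finite. If one prefers to see this directly, fix a coloring \(f:[X]^2\to C\) witnessing the failure of \(\CRT(X)\) and restrict it to \([Y]^2\). Any infinite \(Z\subseteq Y\) carrying a linear order on which \(f\restriction[Y]^2\) is canonical is also an infinite subset of \(X\) on which \(f\) is canonical, since the four Erd\H{o}s--Rado forms only mention pairs from \(Z\) and the order on \(Z\).

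\emph{Closure under equipotence.} Let \(\pi:X\to X'\) be a bijection. It induces a bijection \([X]^2\to[X']^2\). Colorings pull back along this bijection, linear orders on subsets are transported by \(\pi\), and each of the four canonical forms is preserved, because the defining biconditionals are stated purely in terms of equality of colors and the order.

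\emph{Finite sets and \(\omega\).} Finite sets are \(\CRT\)-finite for a trivial reason: \(\CRT(X)\) demands an infinite \(Y\subseteq X\), so it fails whenever \(X\) is finite, for any coloring, including the empty coloring when \(|X|<2\). Finally, \(\omega\notin\CRTFin\) by Proposition~\ref{prop:bgm}(1), since \(\omega\) is well-orderable and infinite.

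None of these steps is a real obstacle. The only point needing care is the subset case, where the canonical witness found inside \(Y\) must also witness canonicity in \(X\). This holds because canonicity is a property of \(f\) on \([Z]^2\) alone, and that is exactly what Proposition~\ref{prop:bgm}(3) packages.
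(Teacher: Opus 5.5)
Your proposal is correct and is essentially the paper's proof, which simply says the argument for Corollary~\ref{cor:rrt-fin-class} carries over using Proposition~\ref{prop:bgm}(1) and~(3). Your added direct check of the subset case and the remark about the empty coloring when \(|X|<2\) are harmless extra detail.
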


\begin{proof}
The argument is analogous, using Proposition~\ref{prop:bgm}(1) and~(3).
\end{proof}

\section{Placement of \(\CRTFin\)}

\begin{proposition}\label{prop:bfin-crt}
\(\BFin\subseteq \CRTFin\).
\end{proposition}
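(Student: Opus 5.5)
Let \(X\) be \(B\)-finite; we show \(\CRT(X)\) fails. If \(X\) is finite, then \(X\in\CRTFin\) trivially, since \(\CRT(X)\) asks for an infinite subset. So assume \(X\) is infinite. The key observation is that the definition of \(\CRT(X)\) (Definition~\ref{def:crt}) demands, as part of its conclusion, an infinite set \(Y\subseteq X\) \emph{together with a linear order \(<\) on \(Y\)}. Since \(X\) is \(B\)-finite, no infinite subset of \(X\) is linearly orderable. So for \emph{any} coloring \(f:[X]^2\to C\) at all, there is no infinite linearly ordered \(Y\subseteq X\), and the conclusion of \(\CRT\) cannot hold for \(f\), whatever form of canonicity is required.

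The plan is therefore short:
\begin{enumerate}[label=(\roman*)]
\item Fix any coloring, say the constant coloring \(f:[X]^2\to\{0\}\). Such a coloring exists in \(\ZF\), so the universal statement \(\CRT(X)\) has a concrete instance to fail on.
\item Suppose toward a contradiction that \(Y\subseteq X\) is infinite and \(<\) is a linear order on \(Y\) with \(f\) canonical on \((Y,<)\).
\item Then \(Y\) is an infinite linearly orderable subset of \(X\), contradicting \(B\)-finiteness.
\end{enumerate}
Hence \(\CRT(X)\) fails, i.e.\ \(X\in\CRTFin\).

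The only point needing care is not mathematical but definitional. We must check that Definition~\ref{def:crt} really requires the order \(<\) to exist as a set, rather than reading ``canonical'' order-freely, for instance as ``some pair-structure exists.'' As stated, the linear order is part of the witness, so the argument goes through directly. No Ramsey-theoretic content is needed, and in particular neither Proposition~\ref{prop:bgm} nor any Erd\H{o}s--Rado argument is invoked. This also shows the inclusion is somewhat degenerate: it reflects the built-in linear order in the formulation of \(\CRT\) rather than any combinatorial obstruction.
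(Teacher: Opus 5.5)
Your proposal is correct and is the same argument as the paper's: \(\CRT(X)\) requires an infinite subset of \(X\) equipped with a linear order, which \(B\)-finiteness rules out. Your explicit choice of a concrete coloring, so that the universally quantified statement has an instance on which to fail, is a small detail the paper leaves implicit.
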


\begin{proof}
If \(X\) is \(B\)-finite, then no infinite subset of \(X\) is linearly orderable.  But \(\CRT(X)\) requires an infinite subset \(Y\subseteq X\) equipped with a linear ordering.  Hence \(\CRT(X)\) fails.
\end{proof}

\begin{corollary}
Every amorphous set is \(\CRT\)-finite.
\end{corollary}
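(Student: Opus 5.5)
The plan is to deduce the statement from Proposition~\ref{prop:bfin-crt} together with the standard inclusion \(\AFin\subseteq\BFin\) recalled in Section~2. The one point that needs care is that "amorphous" refers to the infinite \(A\)-finite sets. Finite sets already lie in \(\CRTFin\) because it is a finiteness class (Corollary~\ref{cor:crt-fin-class}). So it suffices to check that an amorphous set \(A\) is \(B\)-finite, and then invoke Proposition~\ref{prop:bfin-crt}.

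To keep the argument self-contained, I will verify \(\AFin\subseteq\BFin\) directly rather than only citing it. Suppose \(Y\subseteq A\) is infinite and carries a linear order \(<\).

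First I will show that \(Y\) contains an infinite subset with no \(<\)-least element, or an infinite subset with no \(<\)-greatest element. Call \(y\in Y\) \emph{left-finite} if \(\{z\in Y: z<y\}\) is finite, and let \(L\) be the set of left-finite points.

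\textbf{Case \(L\) infinite.} Then \(L\) is ordered in type \(\omega\), since each point has only finitely many predecessors. This gives a definable injection \(\omega\to L\), so \(Y\) can be split into the even-indexed and odd-indexed elements of \(L\), together with the remainder. This produces two disjoint infinite subsets of \(A\).

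\textbf{Case \(L\) finite.} Then \(Y\setminus L\) is infinite. In this case I will split \(Y\) by a cut: pick \(y\in Y\setminus L\), so that \(\{z: z<y\}\) is infinite. Since \(y\in Y\setminus L\), the set \(\{z:z<y\}\) is infinite. If \(\{z:z>y\}\) is also infinite, the cut at \(y\) partitions \(Y\) into two infinite sets.

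The remaining subcase, where every such \(y\) has only finitely many points above it, is handled symmetrically. There the right-finite points form an infinite set of reverse type \(\omega\), and I split it as before.

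In every case, \(A\) is partitioned into two infinite pieces by putting one piece together with \(A\setminus Y\). This contradicts amorphousness, so \(A\) is \(B\)-finite.

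I expect no step to be a real obstacle. The only mildly delicate point is this case analysis showing that an infinite linear order can be cut into two infinite pieces in \(\ZF\), and the left-finite/right-finite dichotomy handles it without any choice. Combining the two facts gives \(A\in\BFin\subseteq\CRTFin\).
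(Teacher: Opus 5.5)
Your proposal is correct and follows the paper's route: the paper places this corollary directly after Proposition~\ref{prop:bfin-crt}, so it follows from the standard inclusion $\AFin\subseteq\BFin$ recalled in Section~2 together with $\BFin\subseteq\CRTFin$. The one difference is that you prove $\AFin\subseteq\BFin$ directly and without choice, using left-finite and right-finite points, whereas the paper just cites it; your argument is sound, though the opening claim about subsets with no least or greatest element is never used and can be dropped.
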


The next observation gives a useful link between canonical Ramsey finiteness and the Ramsey finiteness classes of \cite{BCF}.

\begin{theorem}\label{thm:crt-implies-rt}
For every finite \(n\geq 2\), \(\CRT(X)\Rightarrow \RT^2_n(X)\).  Consequently, \(\RnFin{n}\subseteq \CRTFin\).
\end{theorem}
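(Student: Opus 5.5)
Given a coloring \(c:[X]^2\to n\) with \(n\) finite, I will apply \(\CRT(X)\) directly to \(c\), regarded as a coloring into \(C=n\). This yields an infinite \(Y\subseteq X\) and a linear order \(<\) on \(Y\) such that \(c\) is canonical on \((Y,<)\) in one of the four forms of Definition~\ref{def:canonical-forms}. The plan is to show that the only form compatible with a finite palette on an infinite \(Y\) is the constant one. In that case \(Y\) is an infinite \(c\)-homogeneous set, so \(\RT^2_n(X)\) holds. Contrapositively, if \(X\) is \(R_n\)-finite then \(\CRT(X)\) fails, which gives \(\RnFin{n}\subseteq\CRTFin\).

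The key step is ruling out the three non-constant forms, and the only subtlety is doing so without choice. I will not try to enumerate \(Y\). Instead I fix \(n+1\) distinct elements of \(Y\), which is possible in \(\ZF\) since \(Y\) is infinite (finitely many choices), and list them in \(<\)-increasing order as \(y_0<y_1<\dots<y_n\). Then I add a further element \(z\in Y\) distinct from these.

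\begin{itemize}
\item \emph{Injective case.} The \(\binom{n+1}{2}\ge n+1\) pairs among the \(y_i\) receive pairwise distinct colors, which is impossible with only \(n\) colors.
\item \emph{Left-determined case.} The pairs \(\{y_i,y_n\}\) for \(i<n\), together with one more pair, give \(n+1\) distinct \(<\)-smaller elements. To arrange this cleanly, I take \(n+2\) elements \(y_0<\dots<y_{n+1}\) and use the pairs \(\{y_i,y_{n+1}\}\) for \(i\le n\). Their smaller elements are pairwise distinct, so their colors are pairwise distinct, giving \(n+1\) distinct colors, a contradiction.
\item \emph{Right-determined case.} Symmetrically, I use the pairs \(\{y_0,y_j\}\) for \(1\le j\le n+1\).
\end{itemize}

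Hence \(c\) must be constant on \([Y]^2\).

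I expect essentially no real obstacle. The only point to handle with care is that \(\CRT\) provides an order only on \(Y\), not a well-order. The argument above therefore uses just finitely many elements and sorts them by \(<\), which is legitimate in \(\ZF\). I will note in passing that \(n+2\) distinct elements of an infinite set exist without any choice.
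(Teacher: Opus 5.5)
Your proof is correct and follows the same route as the paper: apply $\CRT(X)$ directly to $c$, then use the finiteness of the palette to rule out the left-determined, right-determined, and injective forms, so only the constant form (an infinite homogeneous set) remains. The one difference is cosmetic: you do a pigeonhole count on $n+2$ sorted elements of $Y$, whereas the paper notes that the relevant color map would inject $Y$ (minus possibly an endpoint), or $[Y]^2$, into the finite set $n$.
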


\begin{proof}
Let \(c:[X]^2\to n\) be a finite coloring and assume \(\CRT(X)\).  Then there is an infinite \(Y\subseteq X\) and a linear order \(<\) on \(Y\) such that \(c\) is canonical on \((Y,<)\), in one of the four forms of Definition~\ref{def:canonical-forms}.  We claim that, because the color set \(n\) is finite while \(Y\) is infinite, the only possible form is the constant one.

Suppose \(c\) is left-determined on \((Y,<)\).  By Definition~\ref{def:canonical-forms}(b), the map sending each \(a\in Y\) to the common color of all pairs \(\{a,b\}\) with \(a<b\) is well-defined and \emph{injective}: distinct \(<\)-smaller elements yield distinct colors.  Hence this is an injection from the infinite set \(Y\) (minus at most its \(<\)-largest element, if one exists) into the finite set \(n\), which is impossible.  The right-determined case is symmetric, using Definition~\ref{def:canonical-forms}(c) and the \(<\)-smallest element.  The injective case is likewise impossible: by Definition~\ref{def:canonical-forms}(d) the coloring \(c\) would be injective on \([Y]^2\), so \([Y]^2\) would inject into the finite set \(n\) and hence be finite; but then \(Y\) would be finite as well, since every element of the infinite set \(Y\) occurs in some pair of \([Y]^2\), contradicting the infinitude of \(Y\).

Therefore \(c\) must be constant on \([Y]^2\), so \(Y\) is an infinite homogeneous set for \(c\), and \(\RT^2_n(X)\) holds.  Taking contrapositives gives \(\RnFin{n}\subseteq\CRTFin\).
\end{proof}

\begin{corollary}
For every \(n\geq 2\), \(\RRTFin{2}{n}\subseteq \CRTFin\).
\end{corollary}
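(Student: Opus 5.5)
The claim is \(\RRTFin{2}{n}\subseteq\CRTFin\). It is the contrapositive of Proposition~\ref{prop:bgm}(4), which says \(\CRT(X)\Rightarrow\RRT^2_n(X)\). So the plan is to take a set \(X\) in \(\RRTFin{2}{n}\), meaning \(\RRT^2_n(X)\) fails, and conclude that \(\CRT(X)\) fails, so \(X\in\CRTFin\).

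To keep the argument from being only a citation, I will also say how the implication itself goes; this is the step that actually needs checking.

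Assume \(\CRT(X)\) and let \(\chi:[X]^2\to C\) be \(n\)-bounded. Apply \(\CRT(X)\) to \(\chi\). This gives an infinite \(Y\subseteq X\) and a linear order \(<\) on \(Y\) such that \(\chi\) is canonical on \((Y,<)\). I then rule out three of the four forms in Definition~\ref{def:canonical-forms}, in the spirit of Theorem~\ref{thm:crt-implies-rt}, now using finiteness of color classes instead of finiteness of the color set.

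\begin{enumerate}[label=(\roman*)]
\item \emph{Constant form.} All of \([Y]^2\) lies in a single color class. That class has at most \(n\) elements, but \([Y]^2\) is infinite because \(Y\) is infinite. So this form is impossible.
\item \emph{Left-determined form.} Choose an element \(a\in Y\) that has infinitely many \(<\)-successors. All pairs \(\{a,b\}\) with \(a<b\) then share one color, so that color class is infinite, again a contradiction.
\item \emph{Right-determined form.} This is symmetric, using an element with infinitely many \(<\)-predecessors.
\item \emph{Injective form.} Here \(\chi\) is injective on \([Y]^2\), so \(Y\) is polychromatic, and \(\RRT^2_n(X)\) holds.
\end{enumerate}

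The only subtle point is (ii) and (iii): one must produce, without choice, an element with infinitely many successors (or predecessors). Fix any three distinct elements of \(Y\). Each of the other elements of \(Y\) is compared with the middle one of the three, and every element of \(Y\) is either below it, equal to it, or above it. Since \(Y\) is infinite, at least one of the sets \(\{y:y<\text{middle}\}\) and \(\{y:y>\text{middle}\}\) is infinite. If the set above is infinite, the middle element works for (ii). If the set above is finite, then the set below is infinite, so the least of the three fixed elements has infinitely many successors and works for (ii).

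For (iii) the argument is the same by symmetry: if the set below the middle element is infinite, the middle element has infinitely many predecessors; otherwise the set above is infinite and the largest of the three fixed elements has infinitely many predecessors.

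This choice of a specific element from a fixed finite set is definable, so no choice is needed. With that, the contrapositive gives the inclusion \(\RRTFin{2}{n}\subseteq\CRTFin\).
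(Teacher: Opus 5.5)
Your first paragraph already proves the corollary, and it matches how the paper obtains it. The paper states the corollary without a separate proof; it is the contrapositive of Proposition~\ref{prop:bgm}(4).

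Your supplementary rederivation of \(\CRT(X)\Rightarrow\RRT^2_n(X)\) has a false step in (ii) and (iii). Suppose the set of elements above the middle element is finite. It does not follow that the least of the three fixed elements has infinitely many successors. Take \((Y,<)\) of order type \(\omega^*\), for instance the negative integers. Every element has only finitely many successors, so no element with infinitely many successors exists, and your route to an infinite colour class cannot work. Symmetrically, in (iii) an order of type \(\omega\) has no element with infinitely many predecessors.

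The repair is to use the bound \(n\) instead of infinitude. Since \(Y\) is infinite, it has \(n+2\) distinct elements; this follows by induction on the size, with no choice needed. Let \(a\) be the least of them. The \(n+1\) pairs \(\{a,b\}\), with \(b\) ranging over the other elements, all have \(a\) as their \(<\)-smaller element. In the left-determined form they therefore lie in one colour class, which has at least \(n+1\) elements and contradicts \(n\)-boundedness. The greatest of the \(n+2\) elements handles the right-determined form in the same way. With this change, (i)--(iv) give a correct self-contained proof of Proposition~\ref{prop:bgm}(4), and hence of the corollary.
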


\begin{remark}
The inclusion \(\BFin\subseteq\CRTFin\) is immediate from the definition, but the inclusion \(\RnFin{n}\subseteq\CRTFin\) uses the canonical alternatives in an essential way.  It says that canonical Ramsey homogeneity for pairs is strong enough to recover ordinary finite-color Ramsey homogeneity on the same set.
\end{remark}

\section{Independence from the Gowers and Hindman class}\label{sec:independence}

We now show that the rainbow and canonical finiteness classes do not collapse to \(\HFin\).  Since Theorem~\ref{thm:gowers-collapse} identifies \(\HFin\) with every \(G_k\)-finiteness class, these are also independence results from the entire Gowers hierarchy.

We state the results in terms of standard Fraenkel--Mostowski permutation models of \(\ZFA\); we follow the terminology and notation of Jech \cite[Chapter~4]{JechAC}.  In each such model, there is a set \(A\) of atoms, a group \(\mathcal G\) of permutations of \(A\), and a normal filter \(\mathcal F\) of subgroups of \(\mathcal G\) generated by pointwise stabilizers of finite sets (called \emph{supports}).  A set is \emph{symmetric} if its pointwise stabilizer belongs to \(\mathcal F\).  For a finite set \(E\subseteq A\), we write \(\mathrm{fix}(E)=\{g\in\mathcal G\mid g(a)=a\text{ for all }a\in E\}\).  We use three standard models: the basic Fraenkel model \(\NN_1\), the second Fraenkel model \(\NN_2\), and Mostowski's linearly ordered model \(\NN_3\); see \cite[Chapter~4]{JechAC} for their construction.  The labels \(\NN_1\), \(\NN_2\), and \(\NN_3\) follow the catalogue of Howard and Rubin \cite{HowardRubin}.

Each independence below is established as a class non-inclusion witnessed by the atom set of a permutation model.  The relevant statements---the truth or failure of \(\RRT^m_n(A)\) and of \(\CRT(A)\), and Dedekind-finiteness or Dedekind-infiniteness of \([A]^{<\omega}\), to which membership in \(\HFin\) reduces by Theorem~\ref{thm:gowers-collapse}---concern colorings of \([A]^m\), subsets of \(A\), and linear orders on subsets of \(A\).  In the statements that quantify over all colorings we may restrict attention to colorings whose values are their own fibers: replacing \(\chi\) by \(x\mapsto\chi^{-1}(\chi(x))\) yields a coloring into \(\mathcal P([A]^m)\) with the same fibers, hence with the same \(n\)-boundedness, the same polychromatic sets, and the same canonical forms.  With this normalization every statement involved is a bounded assertion about objects of low rank over \(A\).  Consequently, by the Jech--Sochor first embedding theorem \cite[Theorem~6.1]{JechAC}, for each such model there is a symmetric extension of a model of \(\ZF\) containing a set realizing the same configuration of finiteness properties; the corresponding inclusions are therefore not provable in \(\ZF\).  We work in \(\ZFA\) in this section and in Section~\ref{sec:parameter}, and leave the routine transfer implicit.

\begin{theorem}\label{thm:rrt-hfin-indep}
For every \(m,n\geq 2\), the classes \(\RRTFin{m}{n}\) and \(\HFin\) are independent.
\end{theorem}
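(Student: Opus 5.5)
The plan is to prove the two non-inclusions separately, each witnessed by the set of atoms of a standard permutation model, and then transfer to \(\ZF\) as described at the start of this section. By Theorem~\ref{thm:gowers-collapse}, membership in \(\HFin\) is equivalent to Dedekind-finiteness of \([A]^{<\omega}\), so only that property needs checking on the \(\HFin\) side.

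\emph{\(\RRTFin{m}{n}\not\subseteq\HFin\).} I use the second Fraenkel model \(\NN_2\). Its atoms form a countable union of pairs \(A=\bigcup_{j<\omega}P_j\), and the sequence \((P_j)_j\) is symmetric. So \((P_j)_j\) is an injection \(\omega\to[A]^{<\omega}\), and \(A\notin\HFin\). For rainbow finiteness, let \(\sigma\) be the permutation swapping the two atoms of every pair. It is central in \(\mathcal G\), so the coloring
\[
\chi(s)=\{s,\sigma[s]\},\qquad s\in[A]^m,
\]
is fixed by every element of \(\mathcal G\) and hence lies in the model. Every fiber has size at most \(2\le n\), so \(\chi\) is \(n\)-bounded.

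Now let \(Y\subseteq A\) be infinite and symmetric with finite support \(E\). For a pair \(P_j\) disjoint from \(E\), the swap of \(P_j\) alone lies in \(\mathrm{fix}(E)\). Hence \(Y\) contains either both atoms of \(P_j\) or neither. Since \(Y\) is infinite, it contains at least \(m\) full pairs \(P_{j_1},\dots,P_{j_m}\) avoiding \(E\). Choose \(s\) with exactly one atom from each of these pairs. Then \(\sigma[s]\neq s\), \(\sigma[s]\subseteq Y\), and \(\chi(s)=\chi(\sigma[s])\). So \(Y\) is not polychromatic, \(\RRT^m_n(A)\) fails, and \(A\in\RRTFin{m}{n}\setminus\HFin\).

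\emph{\(\HFin\not\subseteq\RRTFin{m}{n}\).} I use Mostowski's ordered model \(\NN_3\), where the atoms carry a dense order \(<\) without endpoints and \(\mathcal G\) is the group of order-automorphisms.

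First, \(A\in\HFin\). A finite set with support \(E\) is fixed setwise by \(\mathrm{fix}(E)\). Since the elements of \(\mathrm{fix}(E)\) preserve order, such a set is fixed pointwise, so it is contained in \(E\). Hence a symmetric pairwise disjoint family of nonempty finite sets, together with all its members, has some common support \(E\), so every member lies inside \(E\) and the family is finite. Lemma~\ref{lem:amorphous-hfin} then gives \(A\in\HFin\).

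Second, \(\RRT^m_n(A)\) holds. Let \(\chi\) be \(n\)-bounded with support \(E\), and let \(I\) be an open interval between consecutive points of \(E\cup\{\pm\infty\}\); it is infinite. For \(s\in[I]^m\), the fiber \(\chi^{-1}(\chi(s))\) is a finite set with support \(E\cup s\), so by the observation above its members are \(m\)-subsets of \(E\cup s\). Suppose some \(t\neq s\) lies in this fiber. Then \(t\) meets \(E\), or \(t\) misses some point of \(s\). In either case, moving the points of \(s\setminus t\) inside \(I\) by elements of \(\mathrm{fix}(E\cup t)\) produces infinitely many distinct \(s'\) with \(\chi(s')=\chi(t)\). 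This contradicts \(n\)-boundedness. Therefore \(\chi\) is injective on \([I]^m\), and \(I\) is an infinite polychromatic set.

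The step I expect to need the most care is this last orbit argument in \(\NN_3\). It requires checking that \(\mathrm{fix}(E\cup t)\) really moves \(s\setminus t\) to infinitely many positions, which holds because \(s\setminus t\) is a nonempty subset of the infinite interval \(I\) and \(t\cap I\) is finite.
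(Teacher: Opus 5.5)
Your proof is correct, and it reaches both non-inclusions by a more self-contained route than the paper's.

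For \(\RRTFin{m}{n}\not\subseteq\HFin\), the paper also uses \(\NN_2\). It gets the failure of \(\RRT^m_n(A)\) by citing three things: the \(\ZF\) implication \(\RRT^m_2\Rightarrow\mathrm{AC}^-_2\) of Banerjee--Gopaulsingh--Moln\'ar, the failure of \(\mathrm{AC}^-_2\) for the pairs, and monotonicity in the bound. You instead write down the coloring \(s\mapsto\{s,\sigma[s]\}\) explicitly. You then show directly, by swapping a single pair outside the support, that every infinite symmetric \(Y\) contains infinitely many whole pairs. This is the same mechanism unpacked; it is the pair analogue of the block-complement coloring in Theorem~\ref{thm:block-rrt32-fails}. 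The paper's version has the advantage of being a \(\ZF\) statement about any set with a pair partition admitting no infinite partial choice function.

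For \(\HFin\not\subseteq\RRTFin{m}{n}\), the paper uses \(\NN_1\): amorphousness gives \(H\)-finiteness via Lemma~\ref{lem:amorphous-hfin}, and \(\RRT^m_n(A)\) is cited from BGM. You use \(\NN_3\) instead. You check Dedekind-finiteness of \([A]^{<\omega}\) by hand, and your orbit argument for \(\RRT^m_n(A)\) is exactly the paper's Theorem~\ref{thm:tac} (support-orbit lemma plus trivial algebraic closure) specialized to the dense order. Your choice gives a linearly orderable witness, so the separation does not rest on \(B\)-finiteness. It also reuses the model the paper uses for Theorem~\ref{thm:crt-hfin-indep}. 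The paper's amorphous witness is in addition \(\CRT\)-finite, since \(\AFin\subseteq\BFin\subseteq\CRTFin\), so it also shows that \(\RRT^m_n\) can hold on a \(\CRT\)-finite set.

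One sentence needs repair. As written, ``a symmetric pairwise disjoint family of nonempty finite sets, together with all its members, has some common support'' is false. The family of all singletons of atoms is symmetric with empty support, but its members have no common support. You only need the countable case, which Lemma~\ref{lem:amorphous-hfin} is about:
\begin{enumerate}
\item a countably infinite family in the model comes with an enumeration \(j\mapsto P_j\) in the model;
\item any support \(E\) of that enumeration supports every \(P_j\);
\item so each \(P_j\subseteq E\), which is impossible for infinitely many pairwise disjoint nonempty sets.
\end{enumerate}
Similarly, to conclude that the fiber consists of \(m\)-subsets of \(E\cup s\), apply your observation to the finite set of atoms \(\bigcup\chi^{-1}(\chi(s))\), which is supported by \(E\cup s\).
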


\begin{proof}
First, \(\HFin\not\subseteq\RRTFin{m}{n}\).  In the basic Fraenkel model \(\NN_1\), the set \(A\) of atoms is amorphous \cite[Model \(\mathcal N1\)]{HowardRubin}, hence \(H\)-finite by Lemma~\ref{lem:amorphous-hfin}.  On the other hand, Banerjee, Gopaulsingh, and Moln\'ar \cite[Theorem~4.1(1)]{BGM} show that \(\RRT^m_n(A)\) holds in \(\NN_1\) for all \(m,n\geq 2\).  Thus
\[
    A\in\HFin\setminus\RRTFin{m}{n}.
\]

Conversely, \(\RRTFin{m}{n}\not\subseteq\HFin\).  In the second Fraenkel model \(\NN_2\), the set \(A\) of atoms is partitioned into a countable family \(\{P_k:k<\omega\}\) of pairs admitting no infinite partial choice function; that is, the principle \(\mathrm{AC}^-_2\) (every infinite family of \(2\)-element sets has an infinite subfamily with a choice function) fails for this family in \(\NN_2\) \cite[Model \(\mathcal N2\)]{HowardRubin}.  We use this to show \(\RRT^m_n(A)\) fails for all \(m,n\geq 2\).

Banerjee, Gopaulsingh, and Moln\'ar \cite[Proposition~3.3(1)]{BGM} prove in \(\ZF\) that \(\RRT^m_2\) implies \(\mathrm{AC}^-_2\), by associating to any family \(\mathcal A\) of \(2\)-element sets an explicit \(2\)-bounded coloring of \([\bigcup\mathcal A]^m\) for which every infinite polychromatic set yields an infinite partial choice function on \(\mathcal A\).  Applying this construction to the family \(\{P_k:k<\omega\}\) inside \(\NN_2\) produces a \(2\)-bounded coloring \(\chi:[A]^m\to C\) (with \(A=\bigcup_k P_k\)) such that any infinite polychromatic \(B\subseteq A\) would determine an infinite partial choice function for \(\{P_k:k<\omega\}\).  Since no such choice function exists in \(\NN_2\), the coloring \(\chi\) has no infinite polychromatic set, so \(\RRT^m_2(A)\) fails.  As \(\chi\) is \(2\)-bounded it is in particular \(n\)-bounded for every \(n\geq 2\), and it still admits no infinite polychromatic set; equivalently, by Proposition~\ref{prop:bgm}(5) the failure of \(\RRT^m_2(A)\) propagates upward to the failure of \(\RRT^m_n(A)\) for every \(n\geq 2\).  Hence \(\RRT^m_n(A)\) fails for all \(m,n\geq 2\), and \(A\in\RRTFin{m}{n}\).

(For the case \(m=2\), this recovers Palumbo's theorem that \(\RRT^2_2\) fails in \(\NN_2\) \cite[Theorem~2.3]{Palumbo}.)

Finally, the pair decomposition \(\{P_k:k<\omega\}\) is itself a countably infinite family of distinct nonempty finite subsets of \(A\), so it witnesses that \([A]^{<\omega}\) is Dedekind-infinite.  By the Gowers collapse theorem (Theorem~\ref{thm:gowers-collapse}), \(A\notin\HFin\).
\end{proof}

\begin{lemma}[{\cite[Proposition~4.17]{BCF}}]\label{lem:n3-hfin}
In the Mostowski linearly ordered model \(\NN_3\), the set \(A\) of atoms is \(H\)-finite.
\end{lemma}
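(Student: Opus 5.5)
By Theorem~\ref{thm:gowers-collapse}, it suffices to show that \([A]^{<\omega}\) is Dedekind-finite in \(\NN_3\). By Lemma~\ref{lem:disjoint-pieces}, it is even enough to show that there is no countably infinite pairwise disjoint family of nonempty finite subsets of \(A\), which is the hypothesis of Lemma~\ref{lem:amorphous-hfin}.

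Recall the setup of \(\NN_3\): the atoms carry a dense linear order without endpoints of order type \(\mathbb Q\), the group \(\mathcal G\) consists of the order-automorphisms of \((A,<)\), and the supports are finite. So suppose toward a contradiction that \(j\mapsto P_j\) is an injective sequence in \(\NN_3\) of pairwise disjoint nonempty finite subsets of \(A\). Fix a finite support \(E\) for the function \(j\mapsto P_j\). Every \(g\in\mathrm{fix}(E)\) fixes this function, and it fixes each natural number \(j\). Hence \(g(P_j)=P_j\) for every \(j\), so each \(P_j\) is supported by \(E\).

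The key step is the standard homogeneity fact for \(\NN_3\): a finite set \(P\subseteq A\) with \(g(P)=P\) for all \(g\in\mathrm{fix}(E)\) must satisfy \(P\subseteq E\). I would argue it as follows. Since \(g\) is order-preserving and \(P\) is finite, \(g\) restricted to \(P\) is an order-automorphism of the finite linear order \(P\), so it fixes \(P\) pointwise. Now take \(a\in P\setminus E\). Then \(a\) lies in an open interval determined by consecutive points of \(E\cup\{\pm\infty\}\). By density there is an order-automorphism of \(A\) that fixes \(E\) pointwise and moves \(a\) within that interval; concretely, one can use a piecewise-linear-type map on a copy of \(\mathbb Q\) that is the identity outside the interval. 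This contradicts the pointwise fixing just shown, so \(P\subseteq E\). This is the only step requiring real care, namely checking that such automorphisms exist in \(\mathcal G\). It follows directly from the homogeneity of \((\mathbb Q,<)\), so I expect no genuine obstacle.

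Therefore every \(P_j\subseteq E\). Since the \(P_j\) are nonempty and pairwise disjoint, there are at most \(|E|\) of them, contradicting the assumption that the family is infinite. Hence \([A]^{<\omega}\) is Dedekind-finite in \(\NN_3\), and \(A\) is \(H\)-finite by Lemma~\ref{lem:amorphous-hfin} (equivalently, by Theorem~\ref{thm:gowers-collapse}).
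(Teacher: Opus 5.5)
Your proof is correct, but it takes a different route from the paper. The paper proves nothing about $\NN_3$ directly. It quotes the stronger result of Brot, Cao, and Fern\'andez-Bret\'on that $A$ is $H_3$-finite in $\NN_3$ \cite[Proposition~4.17]{BCF}, and then uses that $H_3$-finiteness implies $H$-finiteness.

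You instead verify the hypothesis of Lemma~\ref{lem:amorphous-hfin} by hand. A finite linear order has no nontrivial automorphisms, and $(\mathbb Q,<)$ is homogeneous; together these force every finite set of atoms supported by $E$ to lie inside $E$. All members of an enumerated family share the enumeration's support, so only finitely many nonempty disjoint such sets can exist. You then conclude by the Gowers collapse, i.e.\ the equivalence from \cite{BCF} between $H$-finiteness and Dedekind-finiteness of $[A]^{<\omega}$.

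The paper's route yields the stronger property of $H_3$-finiteness, at the cost of outsourcing the whole argument to \cite{BCF}. Your route is self-contained modulo Theorem~\ref{thm:gowers-collapse} and uses the same mechanism as Lemma~\ref{lem:amorphous-hfin}. It also makes clear that only $H$-finiteness is at stake, which is all Theorem~\ref{thm:crt-hfin-indep} needs.

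A minor simplification: the appeal to Lemma~\ref{lem:disjoint-pieces} is unnecessary. If $j\mapsto F_j$ is any injective sequence in $[A]^{<\omega}$ with support $E$, your argument gives $F_j\subseteq E$ for every $j$. The sequence then takes at most $2^{|E|}$ values, which already contradicts injectivity.
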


\begin{proof}
Brot, Cao, and Fern\'andez-Bret\'on prove the stronger statement that \(A\) is \(H_3\)-finite \cite[Proposition~4.17]{BCF}; since \(H_3\)-finiteness implies \(H\)-finiteness (see \cite[Section~3]{BCF}), the lemma follows.
\end{proof}

\begin{theorem}\label{thm:crt-hfin-indep}
The classes \(\CRTFin\) and \(\HFin\) are independent.
\end{theorem}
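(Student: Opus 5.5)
The plan is to reuse the two permutation models already introduced, taking one non-inclusion from \(\NN_2\) and the other from \(\NN_3\).

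\textbf{First, \(\CRTFin\not\subseteq\HFin\).} Work in the second Fraenkel model \(\NN_2\) with the set \(A\) of atoms. The proof of Theorem~\ref{thm:rrt-hfin-indep} already shows two things there. First, \(\RRT^2_2(A)\) fails, via the \(2\)-bounded coloring built from the pairs \(P_k\). Second, \([A]^{<\omega}\) is Dedekind-infinite, so \(A\notin\HFin\) by Theorem~\ref{thm:gowers-collapse}. Proposition~\ref{prop:bgm}(4) gives \(\CRT(A)\Rightarrow\RRT^2_2(A)\). Hence \(\CRT(A)\) fails, and
\[
A\in\CRTFin\setminus\HFin.
\]
Equivalently, one can quote the inclusion \(\RRTFin{2}{2}\subseteq\CRTFin\) recorded after Theorem~\ref{thm:crt-implies-rt}. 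This direction is routine.

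\textbf{Second, \(\HFin\not\subseteq\CRTFin\).} Work in Mostowski's model \(\NN_3\). By Lemma~\ref{lem:n3-hfin}, \(A\) is \(H\)-finite, so it remains to verify \(\CRT(A)\) in \(\NN_3\). If \cite{BGM} records this, I would cite it; otherwise I would argue directly as follows.

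Let \(f:[A]^2\to C\) be a coloring in the model, with finite support \(E\subseteq A\). Since \(A\) is a countable dense order without endpoints, \(A\setminus E\) has an infinite convex component \(I\), namely an open interval between consecutive points of \(E\) or beyond them. Take \(Y=I\) with the inherited order of \(A\); this order is in the model.

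Every order automorphism of \(A\) fixing \(E\) pointwise lies in \(\mathrm{fix}(E)\) and so preserves \(f\). By homogeneity of the dense order, such automorphisms act transitively on tuples from \(I\) of the same order type. Therefore, for \(a_1<b_1\) and \(a_2<b_2\) in \(I\), whether \(f(\{a_1,b_1\})=f(\{a_2,b_2\})\) depends only on the order type of \((a_1,b_1,a_2,b_2)\).

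\textbf{The main obstacle} is the combinatorial case analysis that follows. The relation "same color" is an equivalence relation on \([I]^2\) that is invariant under the order type in the above sense. I must show that the only such relations are the four forms (a)--(d) of Definition~\ref{def:canonical-forms}. This is the indiscernible core of the Erd\H{o}s--Rado argument. I would run it by first deciding the disjoint configurations, \(a_1<b_1<a_2<b_2\) and the interleaved or nested ones. If any disjoint configuration is identified, transitivity through a third far-away pair forces all pairs to be identified, giving form (a). Otherwise only pairs sharing an endpoint can be identified. Then I check the three sharing patterns: same left endpoint, same right endpoint, and \(b_1=a_2\). The last pattern, combined with transitivity, would force an identification of disjoint pairs and is therefore excluded. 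What remains yields forms (b), (c), or (d), since having both the same-left and same-right identifications would again chain to form (a).

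Having verified that \(\CRT(A)\) holds, we get
\[
A\in\HFin\setminus\CRTFin,
\]
and the transfer to \(\ZF\) proceeds as described at the start of Section~\ref{sec:independence}.
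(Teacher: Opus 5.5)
Your proposal is correct and uses the same two models as the paper. The $\NN_3$ half matches the paper; the $\NN_2$ half runs through a different mechanism.

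\textbf{The $\NN_2$ half.} The paper gets $A\in\CRTFin$ directly from the structure of a Russell set. An infinite linearly orderable $Y\subseteq A$ would give an infinite partial choice function on the pairs, by taking the least element of each nonempty $Y\cap P_k$. So $A$ is $B$-finite, and Proposition~\ref{prop:bfin-crt} applies: $\CRT(A)$ fails because no infinite subset carries a linear order at all. You instead combine the failure of $\RRT^2_2(A)$ from Theorem~\ref{thm:rrt-hfin-indep} with the implication $\CRT\Rightarrow\RRT^2_2$ from Proposition~\ref{prop:bgm}(4). Both arguments are valid. The paper's route is more elementary: it needs only the absence of infinite partial choice functions in $\NN_2$, not the coloring from \cite[Proposition~3.3(1)]{BGM} or the imported implication. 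It also yields the stronger fact $A\in\BFin$. Yours has the economy of reusing work already done.

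\textbf{The $\NN_3$ half.} Here you match the paper, which simply cites \cite[Theorem~4.8(2)]{BGM}. Your fallback indiscernibility argument is also sound, with one imprecision. The far-away-pair trick only handles the separated configuration $a_1<b_1<a_2<b_2$. For the crossing and nested configurations you must first derive that separated pairs are identified, and only then use the far-away pair. For crossing, use an intermediate pair $(c,d)$ with $a_1<c<b_1<a_2<d<b_2$, which crosses both. For nested, use a pair with $c<a_1$ and $b_2<d$, which encloses both.
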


\begin{proof}
For \(\HFin\not\subseteq\CRTFin\), use the Mostowski linearly ordered model \(\NN_3\).  In that model, the set \(A\) of atoms is \(H\)-finite by Lemma~\ref{lem:n3-hfin}, while \(\CRT(A)\) holds by Banerjee, Gopaulsingh, and Moln\'ar \cite[Theorem~4.8(2)]{BGM}.  Hence \(A\in\HFin\setminus\CRTFin\).

For \(\CRTFin\not\subseteq\HFin\), use the second Fraenkel model \(\NN_2\).  The set \(A\) of atoms is a Russell set, and every Russell set is \(B\)-finite: an infinite linearly orderable subset would single out one atom from each of infinitely many pairs, yielding an infinite partial choice function on the pairs, which \(\NN_2\) does not admit.  By Proposition~\ref{prop:bfin-crt}, \(B\)-finiteness gives \(A\in\CRTFin\).  As in the proof of the previous theorem, the pair decomposition makes \([A]^{<\omega}\) Dedekind-infinite, so \(A\notin\HFin\) by Theorem~\ref{thm:gowers-collapse}.
\end{proof}

\section{Rainbow parameter behavior}\label{sec:parameter}

For fixed \(m\), the rainbow classes are monotone in the bound parameter.

\begin{proposition}\label{prop:rrt-n-monotone}
For fixed \(m\geq 2\),
\[
    \RRTFin{m}{2}
    \subseteq
    \RRTFin{m}{3}
    \subseteq
    \RRTFin{m}{4}
    \subseteq\cdots.
\]
\end{proposition}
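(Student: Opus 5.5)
The plan is to reduce the chain of inclusions to the single step \(\RRTFin{m}{n}\subseteq\RRTFin{m}{n+1}\) for each \(n\geq 2\); the full chain then follows by composing these inclusions. By definition, \(X\in\RRTFin{m}{n}\) means that \(\RRT^m_n(X)\) fails. So the step we need is the contrapositive: \(\RRT^m_{n+1}(X)\Rightarrow\RRT^m_n(X)\).

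That implication is exactly Proposition~\ref{prop:bgm}(5), applied with \(k=n\) and bound \(n+1\). Taking contrapositives gives the desired inclusion.

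We will also give the one-line direct argument, so that the proposition does not rely on the cited result. Suppose \(\chi:[X]^m\to C\) is an \(n\)-bounded coloring with no infinite polychromatic set. Every fiber of \(\chi\) has size at most \(n\leq n+1\), so \(\chi\) is also \((n+1)\)-bounded. Polychromaticity depends only on \(\chi\), not on the bound, so \(\chi\) still has no infinite polychromatic set. Hence the same coloring witnesses that \(\RRT^m_{n+1}(X)\) fails, and \(X\in\RRTFin{m}{n+1}\).

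There is no real obstacle here. The only point needing care is that the definition of \(n\)-bounded uses ``\(\leq n\)'' rather than ``exactly \(n\)'', and that is what makes the witnessing coloring transfer unchanged from bound \(n\) to bound \(n+1\).
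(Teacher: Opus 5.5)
Your proposal is correct and matches the paper's proof: an \(n\)-bounded coloring with no infinite polychromatic set is automatically \((n+1)\)-bounded, so it directly witnesses the failure of \(\RRT^m_{n+1}(X)\). Citing Proposition~\ref{prop:bgm}(5) is harmless but not needed, since the paper proves the inclusion by exactly this direct observation.
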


\begin{proof}
If \(k\leq n\), then every \(k\)-bounded coloring is \(n\)-bounded.  Thus a \(k\)-bounded coloring witnessing failure of \(\RRT^m_k(X)\) is also an \(n\)-bounded coloring witnessing failure of \(\RRT^m_n(X)\).
\end{proof}

We now record two mechanisms that will be used to understand the parameter behavior.  The first is positive: in Fraenkel--Mostowski models where every atom outside a finite parameter set has infinite orbit over that set (a property we call \emph{trivial algebraic closure}; see Definition~\ref{def:tac} below), every bounded coloring is forced to be injective away from the support, so the rainbow Ramsey theorem holds.  The second is negative: in models whose atoms are partitioned into uniform finite blocks (see Definition~\ref{def:block-model} below), one can build bounded colorings by pairing each finite configuration within a block with a complementary one.

\subsection{A support-orbit criterion for rainbow Ramsey}

The next lemma is the basic technical point behind the positive results.  It is useful to separate it from the algebraic closure argument that follows, since this is exactly where the boundedness hypothesis enters.

\begin{lemma}[Support-orbit lemma]
Let \(\NN\) be a Fraenkel--Mostowski model with atom set \(A\), and let \(\chi:[A]^m\to C\) be a \(k\)-bounded coloring with finite support \(E\).  Suppose \(x,y\in[A]^m\) satisfy \(x\cap E=y\cap E=\varnothing\) and \(\chi(x)=\chi(y)\).  Then \(y\) has finite orbit under \(\mathrm{fix}(E\cup x)\).  In fact,
\[
    \left|\mathrm{Orb}_{\mathrm{fix}(E\cup x)}(y)\right|\leq k.
\]
\end{lemma}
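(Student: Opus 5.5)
The argument is a direct orbit computation using only the fact that \(E\) supports \(\chi\). First I will record what being a support means. Every \(g\in\mathrm{fix}(E)\) fixes \(\chi\) as a set of ordered pairs. Hence, for all \(z\in[A]^m\),
\[
    \chi(g[z])=g(\chi(z)).
\]
I will work with the normalized coloring described in Section~\ref{sec:independence}, where each color is its own fiber. The lemma is insensitive to this replacement, since the hypothesis \(\chi(x)=\chi(y)\) and the \(k\)-boundedness depend only on the fibers of \(\chi\). Any support of the original \(\chi\) also supports the normalized one, because the fiber map is definable from \(\chi\).

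Now let \(g\in\mathrm{fix}(E\cup x)\). Since \(g\) fixes \(x\) pointwise, \(g[x]=x\), and so
\[
    \chi(g[y])=g(\chi(y))=g(\chi(x))=\chi(g[x])=\chi(x).
\]
This shows that every element of \(\mathrm{Orb}_{\mathrm{fix}(E\cup x)}(y)\) lies in the fiber \(\chi^{-1}(\chi(x))\). By \(k\)-boundedness, that fiber has at most \(k\) elements. The bound \(|\mathrm{Orb}_{\mathrm{fix}(E\cup x)}(y)|\le k\) follows at once, and finiteness of the orbit is immediate from it.

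The only point that needs care is the equivariance step \(\chi(g[y])=g(\chi(y))\). With the normalized coloring this is clear: the fiber of \(g[y]\) is the \(g\)-image of the fiber of \(y\). Without normalizing, one can phrase it purely in terms of fibers, namely \(\chi(g[y])=\chi(g[x])\) if and only if \(\chi(y)=\chi(x)\), using that \(g\) preserves \(\chi\). This avoids any assumption that \(g\) fixes the colors themselves.

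The hypothesis \(x\cap E=y\cap E=\varnothing\) is not actually needed for the bound. It only matters in the intended applications, where trivial algebraic closure forces \(y\subseteq E\cup x\) and hence \(y=x\).
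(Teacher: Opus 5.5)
Your proof is correct and is essentially the paper's argument: equivariance \(\chi(g[z])=g(\chi(z))\) for \(g\in\mathrm{fix}(E)\), together with \(g[x]=x\), places every translate of \(y\) in the fiber \(\chi^{-1}(\chi(x))\), which has at most \(k\) elements. The detour through the normalized coloring is harmless but unnecessary, since (as your first paragraph already shows) that equivariance holds for any function \(\chi\) fixed by \(g\).
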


\begin{proof}
Let \(g\in\mathrm{fix}(E\cup x)\).  Since \(E\) supports \(\chi\), we have \(\chi(gz)=g(\chi(z))\) for every \(z\in[A]^m\).  Since \(g\) fixes \(x\), it follows that \(g(\chi(x))=\chi(gx)=\chi(x)\).  Now \(\chi(y)=\chi(x)\), so
\[
    \chi(gy)=g(\chi(y))=g(\chi(x))=\chi(x).
\]
Thus every translate \(gy\) lies in the fiber \(\chi^{-1}(\chi(x))\).  This fiber has size at most \(k\), so the orbit of \(y\) under \(\mathrm{fix}(E\cup x)\) has size at most \(k\).
\end{proof}

Given a Fraenkel--Mostowski model with atoms \(A\) and a finite set \(F\subseteq A\), the \emph{algebraic closure} of \(F\) is defined by
\[
    \acl(F)=\{a\in A\mid \text{the orbit of }a\text{ under }\mathrm{fix}(F)\text{ is finite}\}.
\]

\begin{definition}\label{def:tac}
A Fraenkel--Mostowski model with atoms \(A\) has \emph{trivial algebraic closure} if \(\acl(F)=F\) for every finite \(F\subseteq A\).
\end{definition}

\begin{theorem}\label{thm:tac}
Let \(\NN\) be a Fraenkel--Mostowski model whose atoms have trivial algebraic closure.  Then \(\RRT^m_k(A)\) holds for all \(m,k\geq 2\).
\end{theorem}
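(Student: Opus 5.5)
The plan is to show that a $k$-bounded coloring is automatically injective once we stay away from its support, and then take $Y$ to be everything outside the support. Fix $m,k\geq 2$ and a $k$-bounded coloring $\chi:[A]^m\to C$ in $\NN$, and let $E$ be a finite support of $\chi$. The candidate polychromatic set is
\[
    Y=A\setminus E.
\]
It belongs to $\NN$, being supported by $E$. It is infinite, since $A$ is infinite (the standing assumption on an atom set; indeed if $A$ were finite, every atom would have finite orbit under $\mathrm{fix}(\varnothing)$, forcing $A=\acl(\varnothing)=\varnothing$). So it suffices to show that $\chi$ is injective on $[Y]^m$, and this establishes $\RRT^m_k(A)$.

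For injectivity, take $x,y\in[Y]^m$ with $\chi(x)=\chi(y)$. Both are disjoint from $E$, so the Support-orbit lemma applies and gives $\bigl|\mathrm{Orb}_{\mathrm{fix}(E\cup x)}(y)\bigr|\leq k$. Next I pass from finiteness of the orbit of the \emph{set} $y$ to finiteness of the orbit of each \emph{atom} $a\in y$. For $g\in\mathrm{fix}(E\cup x)$ we have $g(a)\in g(y)$. Hence the orbit of $a$ is contained in the union of the sets in $\mathrm{Orb}_{\mathrm{fix}(E\cup x)}(y)$, which is a union of at most $k$ sets of size $m$. So the orbit of $a$ has at most $km$ elements.

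This shows $y\subseteq\acl(E\cup x)$. By trivial algebraic closure, $\acl(E\cup x)=E\cup x$. Since $y\cap E=\varnothing$, we get $y\subseteq x$, and since $|x|=|y|=m$, we conclude $y=x$. Thus $\chi$ is injective on $[Y]^m$, and $Y$ is an infinite polychromatic set.

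I do not expect a serious obstacle here. The only point needing care is the step from "the orbit of $y$ is finite" to "each element of $y$ lies in $\acl(E\cup x)$". The argument above handles it, using that $E\cup x$ is finite, so its algebraic closure is defined and trivial. I also need to note that the conclusion does not depend on $k$ beyond finiteness. Consequently, every bounded coloring is injective off its support, which matches the informal description of this mechanism given before the Support-orbit lemma.
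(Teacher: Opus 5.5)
Your proof is correct and matches the paper's argument step for step: take \(A\setminus E\) for a finite support \(E\) of \(\chi\), apply the support-orbit lemma to get a finite orbit of \(y\) under \(\mathrm{fix}(E\cup x)\), pass to the individual atoms of \(y\), and conclude \(y\subseteq\acl(E\cup x)=E\cup x\), hence \(y=x\). One small quibble: your parenthetical shows only that \(A\) is either infinite or empty, so it is the standing assumption that \(A\) is infinite that actually does the work.
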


\begin{proof}
Let \(\chi:[A]^m\to C\) be a \(k\)-bounded coloring, and let \(E\) be a finite support for \(\chi\).  We claim that \(\chi\) is injective on \([A\setminus E]^m\).

Suppose \(x,y\in[A\setminus E]^m\) and \(\chi(x)=\chi(y)\).  By the support-orbit lemma, \(y\) has finite orbit under \(\mathrm{fix}(E\cup x)\).  Hence every atom \(a\in y\) also has finite orbit under \(\mathrm{fix}(E\cup x)\): indeed, the orbit of \(a\) is contained in the finite union
\[
    \bigcup\{z\mid z\in \mathrm{Orb}_{\mathrm{fix}(E\cup x)}(y)\}.
\]
Therefore
\[
    a\in\acl(E\cup x)=E\cup x.
\]
Since \(y\cap E=\varnothing\), this gives \(y\subseteq x\).  But \(|x|=|y|=m\), so \(x=y\).  Thus \(\chi\) is injective on \([A\setminus E]^m\), and \(A\setminus E\) is an infinite polychromatic subset of \(A\).
\end{proof}

\begin{corollary}
Let \(\NN\) be a Fraenkel--Mostowski model with trivial algebraic closure whose atoms \(A\) are the vertices of a countably infinite homogeneous relational structure,\footnote{Such as the Rado graph, a Rado \(r\)-uniform hypergraph, or a linearly ordered variant thereof.  Homogeneity alone does not suffice here: a countable perfect matching is a homogeneous graph in which every vertex is algebraic over its partner.} with the automorphism group of the structure acting on \(A\) and finite supports.  Then \(\RRT^m_k(A)\) holds for all \(m,k\geq 2\).
\end{corollary}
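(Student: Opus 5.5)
This corollary should follow directly from Theorem~\ref{thm:tac}. The whole task is to check that the model described meets that theorem's hypotheses, which in turn means checking that the ambient setup is a genuine Fraenkel--Mostowski model in the sense of Section~\ref{sec:independence}.

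First I would fix the setup. Let \(\mathcal M\) be the countable homogeneous relational structure with universe \(A\), and let \(\mathcal G=\mathrm{Aut}(\mathcal M)\). Let \(\mathcal F\) be the filter generated by the pointwise stabilizers \(\mathrm{fix}(E)\) for finite \(E\subseteq A\). I would then verify that \(\mathcal F\) is a normal filter.
\begin{itemize}
\item For \(g\in\mathcal G\), conjugation gives \(g\,\mathrm{fix}(E)\,g^{-1}=\mathrm{fix}(gE)\).
\item Every atom \(a\) is fixed by \(\mathrm{fix}(\{a\})\), which lies in \(\mathcal F\).
\end{itemize}
So \(\NN\) is a permutation model with atoms \(A\) and finite supports. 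This is exactly the setting in which the support-orbit lemma and Theorem~\ref{thm:tac} were formulated.

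Second, and this is the only point of substance, I would confirm trivial algebraic closure, \(\acl(F)=F\) for every finite \(F\subseteq A\). The corollary lists this as a hypothesis, so strictly speaking it is simply given. For the examples in the footnote I would still indicate why it holds, using homogeneity and the extension property. Given finite \(F\) and \(a\notin F\), I would use the extension property to produce infinitely many \(b\notin F\) with the same quantifier-free type as \(a\) over \(F\). For each such \(b\), the finite partial isomorphism \(F\cup\{a\}\to F\cup\{b\}\) that is the identity on \(F\) extends to an automorphism by homogeneity. That automorphism lies in \(\mathrm{fix}(F)\), so the orbit of \(a\) under \(\mathrm{fix}(F)\) is infinite.

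I expect the construction of infinitely many such \(b\) to be the main obstacle, although it is mild.
\begin{itemize}
\item For the Rado graph and the Rado \(r\)-uniform hypergraphs, it follows from the one-point extension axioms applied over \(F\) together with one additional new vertex at a time.
\item For the linearly ordered variants, density of the order also has to be used to realize the required cut.
\end{itemize}
The footnote explains why this step cannot be skipped: homogeneity alone fails, as the perfect matching shows.

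Finally, with trivial algebraic closure in hand, I would invoke Theorem~\ref{thm:tac} to conclude that \(\RRT^m_k(A)\) holds in \(\NN\) for all \(m,k\geq 2\). Its proof exhibits \(A\setminus E\), where \(E\) is a support of the coloring, as an infinite polychromatic set.
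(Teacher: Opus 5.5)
Your proposal is correct and takes the same route as the paper. The paper also treats the corollary as an immediate application of Theorem~\ref{thm:tac}. It justifies trivial algebraic closure for the footnote examples the way you do: infinitely many realizations of the same quantifier-free type over \(F\) give an infinite \(\mathrm{fix}(F)\)-orbit. You additionally spell out the normal-filter check and the homogeneity step that extends \(F\cup\{a\}\to F\cup\{b\}\) to an automorphism, both of which the paper leaves implicit.
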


\begin{proof}
The result is immediate from Theorem~\ref{thm:tac}.  For the structures mentioned in the footnote, the algebraic closure is indeed trivial: over a finite set of parameters, every atom outside the parameter set has infinitely many realizations of the same quantifier-free type, so its orbit under the pointwise stabilizer is infinite.
\end{proof}

\subsection{The three-block model}

We next analyze the simplest block model that separates the first two levels of the \(n\)-parameter chain.  The argument is included in some detail because it is easy to wave one's hands here and accidentally use more homogeneity than the model actually has.

\begin{definition}\label{def:block-model}
The \emph{\(3\)-block model} is the Fraenkel--Mostowski model in which
\[
    A=\bigcup_{i<\omega}B_i,
    \qquad |B_i|=3,
\]
the group \(\mathcal G\) consists of all permutations \(\pi:A\to A\) satisfying \(\pi[B_i]=B_i\) for every \(i<\omega\), and supports are finite.
\end{definition}

We first record the following support fact.

\begin{lemma}\label{lem:block-support}
Every symmetric subset \(Y\subseteq A\) in the \(3\)-block model is the union of whole blocks and a finite set.  More precisely, if \(E\) is a finite support for \(Y\), then \(Y\cap B_i\in\{\varnothing,B_i\}\) for every block \(B_i\) disjoint from \(E\).  Consequently, if \(Y\) is infinite, then \(Y\) contains infinitely many whole blocks disjoint from \(E\).
\end{lemma}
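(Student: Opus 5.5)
The plan is a direct symmetry argument using the fact that the group \(\mathcal G\) acts on each block \(B_i\) by the full symmetric group, independently of the other blocks.

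Let \(E\) be a finite support for \(Y\), so every \(\pi\in\mathrm{fix}(E)\) satisfies \(\pi[Y]=Y\). Fix a block \(B_i\) with \(B_i\cap E=\varnothing\). Suppose, aiming for a contradiction, that \(Y\cap B_i\) is neither empty nor all of \(B_i\). Then there are \(a,b\in B_i\) with \(a\in Y\) and \(b\notin Y\).

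Let \(\pi\) be the transposition of \(a\) and \(b\), extended by the identity elsewhere. This \(\pi\) preserves every block, so \(\pi\in\mathcal G\). Since \(a,b\notin E\), it also fixes \(E\) pointwise, so \(\pi\in\mathrm{fix}(E)\). Hence \(\pi[Y]=Y\). But \(a\in Y\) and \(\pi(a)=b\notin Y\), a contradiction. Therefore \(Y\cap B_i\in\{\varnothing,B_i\}\) for every block disjoint from \(E\).

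Consequently \(Y\) is the union of the whole blocks \(B_i\) (disjoint from \(E\)) that it contains, together with the set \(Y\cap\bigcup\{B_j: B_j\cap E\neq\varnothing\}\). This last set is finite, since only finitely many blocks meet the finite set \(E\), and each block has three elements.

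For the final clause, suppose \(Y\) is infinite. Removing the finite part above leaves an infinite set. That set is a union of whole blocks disjoint from \(E\), each of size \(3\), so it must consist of infinitely many such blocks.

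There is no real obstacle here. The one point to check is that the transposition belongs to \(\mathcal G\), which holds because \(\mathcal G\) contains every block-preserving permutation. No further homogeneity of the model is used.
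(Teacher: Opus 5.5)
Your proof is correct and takes essentially the same route as the paper. The paper argues that \(\mathrm{Sym}(B_i)\subseteq\mathrm{fix}(E)\) acts transitively on \(B_i\); your transposition of \(a\) and \(b\) is simply an explicit witness of that transitivity, and your remark that only finitely many blocks meet \(E\) spells out the finiteness step the paper leaves implicit in the last clause.
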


\begin{proof}
If \(B_i\cap E=\varnothing\), then every permutation of \(B_i\) fixes \(E\) pointwise and hence preserves \(Y\).  Since the symmetric group \(\mathrm{Sym}(B_i)\) acts transitively on \(B_i\), either no element of \(B_i\) lies in \(Y\), or every element of \(B_i\) lies in \(Y\).  Since \(Y\) is infinite and each block is finite, this must happen for infinitely many blocks.
\end{proof}

\begin{proposition}\label{prop:block-rrt23-fails}
In the \(3\)-block model, \(\RRT^2_3(A)\) fails.
\end{proposition}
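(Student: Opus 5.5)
The plan is to write down one explicit \(3\)-bounded coloring \(\chi:[A]^2\to C\) that is symmetric in the \(3\)-block model, and then to use Lemma~\ref{lem:block-support} to show that no infinite subset of \(A\) in the model is polychromatic for it. The guiding observation is that each block \(B_i\) has exactly three unordered pairs, so collapsing those three pairs to a single color keeps the fibers of size at most \(3\). Every infinite symmetric set must swallow whole blocks, so it cannot avoid such a collapsed fiber.

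Concretely, I would define, for \(\{a,b\}\in[A]^2\),
\[
    \chi(\{a,b\})=
    \begin{cases}
        B_i, & \text{if } a,b\in B_i \text{ for some } i<\omega,\\
        \{a,b\}, & \text{if } a,b \text{ lie in different blocks.}
    \end{cases}
\]
The two kinds of values cannot collide, since a block is a \(3\)-element set and a cross pair is a \(2\)-element set. Hence the fiber over \(B_i\) is exactly \([B_i]^2\), which has size \(3\), and every fiber over a cross pair has size \(1\). So \(\chi\) is \(3\)-bounded. (One could instead use the normalization of Section~\ref{sec:independence}, coloring by fibers, but this is unnecessary.)

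Next I would check that \(\chi\) lies in the model with empty support. Every \(\pi\in\mathcal G\) satisfies \(\pi[B_i]=B_i\), so \(\pi\) maps within-block pairs of \(B_i\) to within-block pairs of \(B_i\) and cross pairs to cross pairs. Therefore
\[
    \chi(\pi\{a,b\})=\pi(\chi(\{a,b\})),
\]
because \(\pi(B_i)=B_i\) in the first case and \(\pi\{a,b\}=\pi\{a,b\}\) in the second. Thus \(\chi\) is fixed by all of \(\mathcal G\), and so it is symmetric.

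Finally, suppose toward a contradiction that \(Y\subseteq A\) is an infinite set in the model that is polychromatic for \(\chi\). Fix a finite support \(E\) of \(Y\). By Lemma~\ref{lem:block-support}, \(Y\) contains some whole block \(B_i\) disjoint from \(E\). Then the three distinct pairs in \([B_i]^2\subseteq[Y]^2\) all receive the color \(B_i\), so \(\chi\) is not injective on \([Y]^2\). This contradiction shows that \(\chi\) has no infinite polychromatic set, and hence \(\RRT^2_3(A)\) fails.

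I do not expect a real obstacle. The only point needing care is the equivariance check, namely that the colors themselves (the blocks) are fixed by \(\mathcal G\). The block lemma already does the essential work of ruling out infinite sets that meet each block in at most one point.
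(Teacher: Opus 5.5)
Your proposal is correct and matches the paper's proof: you use the same $3$-bounded, empty-support coloring (collapse each $[B_i]^2$ to one color, give cross-block pairs their own colors) and the same appeal to Lemma~\ref{lem:block-support} to place a whole block inside any infinite $Y$. The only difference is cosmetic: you label the within-block color by the block $B_i$ itself rather than by its index $i$.
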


\begin{proof}
Define a coloring \(c:[A]^2\to C\) by
\[
    c(\{a,b\})=
    \begin{cases}
        i, & \text{if }a,b\in B_i,\\
        \{a,b\}, & \text{if }a,b\text{ lie in distinct blocks.}
    \end{cases}
\]
This coloring is \(3\)-bounded: each color \(i<\omega\) is used exactly three times, once for each pair in \([B_i]^2\), and every cross-block pair receives its own color.  It is also symmetric with empty support.

Let \(Y\subseteq A\) be infinite.  Let \(E\) be a finite support for \(Y\).  By Lemma~\ref{lem:block-support}, \(Y\) contains some whole block \(B_i\) disjoint from \(E\).  Then all three pairs in \([B_i]^2\) lie in \([Y]^2\) and have the same color.  Thus \(Y\) is not polychromatic.  Therefore \(\RRT^2_3(A)\) fails.
\end{proof}

It remains to show that \(\RRT^2_2(A)\) holds in the same model.  The key observation is the following: given a \(2\)-bounded coloring, equality of colors defines an invariant equivalence relation on pairs with classes of size at most \(2\), and we show that the \(3\)-block action admits no such nontrivial equivalence relation on pairs away from the support.

For a finite set \(E\subseteq A\), write \(A_E=\bigcup\{B_i\mid B_i\cap E=\varnothing\}\) for the union of all blocks disjoint from \(E\).

\begin{lemma}
Suppose \(\sim\) is a \(\mathrm{fix}(E)\)-invariant equivalence relation on \([A_E]^2\) whose equivalence classes have size at most \(2\).  Then \(\sim\) is equality.
\end{lemma}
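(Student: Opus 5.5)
The plan is to turn the bound on class sizes into a rigidity statement about stabilizers, and then to rule out every possible configuration of two distinct pairs by exhibiting a suitable block permutation. Throughout, write \(H=\mathrm{fix}(E)\). Restricted to \(A_E\), \(H\) acts as the full product \(\prod\{\mathrm{Sym}(B_i)\mid B_i\cap E=\varnothing\}\), so we may permute each block disjoint from \(E\) independently.

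\textbf{Key reduction.} Suppose \(p\sim q\) with \(p\neq q\), both in \([A_E]^2\). Let \(g\in H\) fix \(p\) setwise. Invariance gives \(gq\sim gp=p\), so \(gq\) lies in the class of \(p\). That class contains \(p\) and \(q\) and has size at most \(2\), so it equals \(\{p,q\}\). Since \(gq\neq gp=p\), we get \(gq=q\). Hence \(q\) is fixed setwise by the setwise stabilizer \(H_{\{p\}}\) of \(p\) in \(H\). Since \(\sim\) is symmetric, the same holds with the roles exchanged: \(p\) is fixed setwise by \(H_{\{q\}}\). It therefore suffices to show that for distinct pairs \(p,q\) in \(A_E\), one of them is moved by some element of the setwise stabilizer of the other.

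\textbf{Case analysis.} Pick an atom \(a\in q\setminus p\), and let \(B\) be its block.
\begin{enumerate}[label=(\roman*)]
\item \(B\cap p=\varnothing\). Every permutation of \(B\) fixes \(p\). Transpositions in \(B\) moving \(a\) force \(q\subseteq B\). A \(3\)-cycle on \(B\) then fixes \(p\) but moves \(q\), a contradiction.
\item \(p\subseteq B\), so \(p=\{b,c\}\) and \(a\) is the third element of \(B\). Write \(q=\{a,d\}\). If \(d\in p\), the transposition \((b\,c)\) fixes \(p\) setwise and moves \(q\). If \(d\notin B\), a transposition in the block of \(d\) (which is disjoint from \(p\)) moves \(q\) and fixes \(p\).
\item \(|B\cap p|=1\), so \(B=\{b,a,a'\}\) with \(b\in p\). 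The transposition \((a\,a')\) fixes \(p\), so by the key reduction \(q=\{a,a'\}\). Now \(p=\{b,e\}\) with \(e\notin B\), since \(p\not\subseteq B\). Here we use the symmetric form of the reduction: a transposition in the block of \(e\) moving \(e\) to some \(e'\) fixes \(q\) pointwise but sends \(p\) to \(\{b,e'\}\neq p\), a contradiction.
\end{enumerate}

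Hence no two distinct pairs are equivalent, so \(\sim\) is equality. The main obstacle is case (iii), where the stabilizer of \(p\) genuinely fixes \(q\). This is why the reduction must be applied symmetrically, equivalently by noting that otherwise the class would contain the three distinct pairs \(p\), \(\{b,e'\}\) and \(q\). Blocks of size exactly \(3\) are used in (i) and (ii) to supply the \(3\)-cycle and the third element.
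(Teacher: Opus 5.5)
Your proof is correct, but it is organized differently from the paper's.

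The paper first decomposes $[A_E]^2$ into $\mathrm{fix}(E)$-orbits: the three internal pairs of a block, and the nine cross pairs between two blocks. It then shows $\sim$ is equality inside each orbit by counting. A transitive action on three points forces classes of size $1$ or $3$. The stabilizer of a cross pair has suborbits of sizes $1,2,2,4$ on its orbit, so a nontrivial class has size at least $3$. Finally, it rules out equivalences between distinct orbits: if some block meets $q$ but not $p$, permuting that block fixes $p$ and moves $q$. Hence $p$ and $q$ meet the same blocks and lie in the same orbit.

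You instead isolate the rigidity principle once. If $p\sim q$ with $p\neq q$, the class is exactly $\{p,q\}$, so the setwise stabilizer of each pair must fix the other. You then eliminate every configuration with a single explicit transposition or $3$-cycle, split by how the block of an atom $a\in q\setminus p$ meets $p$. Your cases (i)--(iii) are exhaustive. Each permutation you use lies in $\mathrm{fix}(E)$, since the block it acts on contains an atom of $p\cup q\subseteq A_E$ and is therefore disjoint from $E$.

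Your route is shorter and uniform. It never needs orbit or suborbit sizes, and it makes explicit the mechanism the paper uses only implicitly in its cross-orbit step (your case (iii) is exactly the paper's ``by symmetry'' step). The paper's orbit-by-orbit bookkeeping, in exchange, exhibits the full suborbit structure of the action. That is the kind of finite orbit-equivalence computation its later remark anticipates needing when block size and arity vary.
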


\begin{proof}
The action of \(\mathrm{fix}(E)\) partitions \([A_E]^2\) into orbits of two kinds.  For each block \(B_i\) disjoint from \(E\), the set \([B_i]^2\) of internal pairs forms one orbit (of size \(3\)).  For each pair of distinct blocks \(B_i,B_j\) disjoint from \(E\), the set \(\{\{a,b\}\mid a\in B_i,\, b\in B_j\}\) of cross-block pairs forms one orbit (of size \(9\)).  We show that \(\sim\) must be equality on each orbit.

On \([B_i]^2\), the group \(\mathrm{Sym}(B_i)\) acts transitively on three points.  Therefore an invariant equivalence relation on this orbit has classes of size either \(1\) or \(3\).  Since the classes have size at most \(2\), only equality is possible.

Now consider the cross-block orbit between \(B_i\) and \(B_j\).  Fix \(p=\{a,b\}\) with \(a\in B_i\) and \(b\in B_j\).  The stabilizer of \(p\) in \(\mathrm{Sym}(B_i)\times \mathrm{Sym}(B_j)\) fixes \(p\), has one orbit of size \(2\) on the pairs \(\{a,b'\}\) with \(b'\in B_j\setminus\{b\}\), one orbit of size \(2\) on the pairs \(\{a',b\}\) with \(a'\in B_i\setminus\{a\}\), and one orbit of size \(4\) on the remaining pairs \(\{a',b'\}\).  Thus any invariant equivalence class containing \(p\) is either \(\{p\}\), or has size at least \(3\).  Since all classes have size at most \(2\), the class of \(p\) is \(\{p\}\).  As \(p\) was arbitrary, \(\sim\) is equality on this orbit.

It remains to rule out equivalence between two different orbits.  Suppose \(p\sim q\), with \(p\) and \(q\) lying in distinct orbits and \(p\neq q\).  If some block meeting \(q\) does not meet \(p\), then the stabilizer of \(p\) contains the full symmetric group on that block.  Since \(q\) contains at most two of the three atoms in that block, some permutation fixing \(p\) moves \(q\) to a different pair \(q'\).  Invariance gives \(p\sim q'\), so the class of \(p\) contains \(p,q,q'\), a contradiction.

Thus every block meeting \(q\) also meets \(p\).  By symmetry, every block meeting \(p\) also meets \(q\).  Hence \(p\) and \(q\) meet the same set of blocks.  But if a pair meets one block, it is an internal pair, and all internal pairs in that block form one orbit.  If a pair meets two blocks, it is a cross-block pair, and all cross-block pairs between those two blocks form one orbit.  This contradicts the assumption that \(p\) and \(q\) lie in distinct orbits.  Therefore no such \(p,q\) exist, and \(\sim\) is equality on all of \([A_E]^2\).
\end{proof}

\begin{theorem}\label{thm:block-rrt22-holds}
In the \(3\)-block model, \(\RRT^2_2(A)\) holds.
\end{theorem}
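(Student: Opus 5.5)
We prove that every \(2\)-bounded coloring \(\chi:[A]^2\to C\) in the \(3\)-block model admits an infinite polychromatic subset. The plan is to apply the preceding lemma to the equivalence relation induced by \(\chi\), and then pick one atom from each block to sidestep the support \(E\).

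First fix a finite support \(E\) for \(\chi\). Define \(p\sim q\) on \([A_E]^2\) by \(\chi(p)=\chi(q)\). This is an equivalence relation.

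Its classes have size at most \(2\), because each class is contained in a fiber of the \(2\)-bounded coloring \(\chi\).

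It is \(\mathrm{fix}(E)\)-invariant. Indeed, for \(g\in\mathrm{fix}(E)\) we have \(\chi(gp)=g(\chi(p))\), so \(\chi(p)=\chi(q)\) implies \(\chi(gp)=\chi(gq)\). Here we use that \(E\) supports \(\chi\), as in the support-orbit lemma. We also need \(g\) to map \([A_E]^2\) to itself. This holds because every \(g\in\mathcal G\) fixes each block setwise, so \(A_E\) is preserved.

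The preceding lemma then shows that \(\sim\) is equality. Hence \(\chi\) is injective on \([A_E]^2\).

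Next we check that \(A_E\) is infinite. Only finitely many blocks meet the finite set \(E\), so \(A_E\) is the union of all but finitely many blocks.

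We also check that \(A_E\) is symmetric, so that it actually belongs to the model. The set \(A_E\) is supported by \(E\), and in fact by \(\varnothing\) together with the finitely many excluded blocks. Every \(g\in\mathcal G\) preserves each block, so \(A_E\) is fixed by all of \(\mathcal G\). Since \(\chi\) is injective on \([A_E]^2\), the set \(Y=A_E\) is an infinite polychromatic subset of \(A\) lying in the model.

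Note that we do not need to choose one atom per block, which would fail in this model anyway. Pairs inside a block are permitted in a polychromatic set as long as all colors on \([Y]^2\) are distinct, and injectivity on \([A_E]^2\) gives exactly that. This fits with Proposition~\ref{prop:block-rrt23-fails}: that argument relied on \(3\)-boundedness to allow the three internal pairs of a block to share a color, and \(2\)-boundedness rules this out.

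The only real obstacle is already handled by the preceding lemma. What remains is routine checking: that the equivalence relation "same color" is \(\mathrm{fix}(E)\)-invariant, which uses the identity \(\chi(gz)=g(\chi(z))\) for \(g\in\mathrm{fix}(E)\), and that \(A_E\) is infinite and symmetric.
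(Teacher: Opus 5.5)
Your proof is correct and follows the paper's argument. You fix a finite support \(E\) for \(\chi\), apply the preceding lemma to the \(\mathrm{fix}(E)\)-invariant relation ``same color'' on \([A_E]^2\) to get injectivity there, and take \(A_E\) as the infinite polychromatic set; you also explicitly check that \(\mathrm{fix}(E)\) preserves \([A_E]^2\) and that \(A_E\) is symmetric (indeed fixed by all of \(\mathcal G\)), which the paper leaves implicit. The one blemish is your opening sentence announcing a plan to pick one atom from each block, which you never use and later correctly note is impossible in this model, so it should be deleted.
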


\begin{proof}
Let \(\chi:[A]^2\to C\) be a \(2\)-bounded coloring, and let \(E\) be a finite support for \(\chi\).  The set \(A_E\) is infinite.

Define an equivalence relation on \([A_E]^2\) by
\[
    p\sim q
    \quad\Longleftrightarrow\quad
    \chi(p)=\chi(q).
\]
Since \(E\) supports \(\chi\), this equivalence relation is \(\mathrm{fix}(E)\)-invariant.  Since \(\chi\) is \(2\)-bounded, every equivalence class has size at most \(2\).  By the previous lemma, \(\sim\) is equality.  Hence \(\chi\) is injective on \([A_E]^2\), and \(A_E\) is an infinite polychromatic subset of \(A\).  Thus \(\RRT^2_2(A)\) holds.
\end{proof}

\begin{corollary}
The inclusion \(\RRTFin{2}{2}\subseteq \RRTFin{2}{3}\) is consistently strict: in the \(3\)-block model, \(A\in \RRTFin{2}{3}\setminus \RRTFin{2}{2}\).  In particular, \(\ZF\) does not prove \(\RRTFin{2}{2}=\RRTFin{2}{3}\).  (Strictness is of course not provable outright either, since under \(\AC\) every finiteness class coincides with \(\Fin\).)
\end{corollary}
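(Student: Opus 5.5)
The corollary follows by putting together Proposition~\ref{prop:block-rrt23-fails} and Theorem~\ref{thm:block-rrt22-holds}, with the transfer argument of Section~\ref{sec:independence} supplying the \(\ZF\) conclusion. The plan is to compute the atom set \(A\) of the \(3\)-block model against both rainbow classes and then read off the non-provability statement.

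\textbf{Step 1: \(A\in\RRTFin{2}{3}\).} By Proposition~\ref{prop:block-rrt23-fails}, \(\RRT^2_3(A)\) fails in the \(3\)-block model, which is exactly the definition of \(A\in\RRTFin{2}{3}\).

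\textbf{Step 2: \(A\notin\RRTFin{2}{2}\).} By Theorem~\ref{thm:block-rrt22-holds}, \(\RRT^2_2(A)\) holds in the same model, so \(A\notin\RRTFin{2}{2}\). Together with Step~1 this places \(A\) in \(\RRTFin{2}{3}\setminus\RRTFin{2}{2}\), so the inclusion of Proposition~\ref{prop:rrt-n-monotone} is strict in this model of \(\ZFA\).

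\textbf{Step 3: transfer to \(\ZF\).} This is the only step needing any care. As in Section~\ref{sec:independence}, I would normalize colorings so that each color is its own fiber. Then both \(\RRT^2_2(A)\) and the failure of \(\RRT^2_3(A)\) become bounded statements about low-rank objects over \(A\): colorings of \([A]^2\) into \(\mathcal P([A]^2)\), and subsets of \(A\). The Jech--Sochor first embedding theorem \cite[Theorem~6.1]{JechAC} then gives a symmetric model of \(\ZF\) containing a set with the same two properties. Hence \(\ZF\) does not prove \(\RRTFin{2}{2}=\RRTFin{2}{3}\).

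\textbf{Step 4: strictness is not a theorem either.} Under \(\AC\), \(\Fin=\DFin\), and every finiteness class lies between these two, so \(\RRTFin{2}{2}=\RRTFin{2}{3}=\Fin\). Since \(\ZFC\) is consistent relative to \(\ZF\), \(\ZF\) cannot prove that the inclusion is strict. This justifies the parenthetical remark in the statement.
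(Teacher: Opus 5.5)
Your proposal is correct and matches the paper's proof: the inclusion is Proposition~\ref{prop:rrt-n-monotone}, membership \(A\in\RRTFin{2}{3}\setminus\RRTFin{2}{2}\) comes from Proposition~\ref{prop:block-rrt23-fails} together with Theorem~\ref{thm:block-rrt22-holds}, and the \(\ZF\) conclusion is the Jech--Sochor transfer of Section~\ref{sec:independence}. Your Step~4 simply spells out the parenthetical remark about \(\AC\), which the paper states without argument.
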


\begin{proof}
The inclusion is Proposition~\ref{prop:rrt-n-monotone}.  By Theorem~\ref{thm:block-rrt22-holds} and Proposition~\ref{prop:block-rrt23-fails}, in the \(3\)-block model \(\RRT^2_2(A)\) holds while \(\RRT^2_3(A)\) fails, so \(A\in \RRTFin{2}{3}\setminus \RRTFin{2}{2}\); the transfer to \(\ZF\) is as described in Section~\ref{sec:independence}.
\end{proof}

\subsection{A first separation in the arity parameter}

The same \(3\)-block model also gives an obstruction at arity \(3\).  This does not by itself prove a general monotone hierarchy in the \(m\)-direction: no implication between \(\RRT^2_2\) and \(\RRT^3_2\) is available in \(\ZF\) in either direction---the model refutes \(\RRT^2_2\Rightarrow\RRT^3_2\), and the converse is not known to be provable (see Question~\ref{q:m-direction}).  What it does show is that the arity parameter is not merely cosmetic: the same set can satisfy \(\RRT^2_2\) while failing \(\RRT^3_2\).

\begin{theorem}\label{thm:block-rrt32-fails}
In the \(3\)-block model, \(\RRT^3_2(A)\) fails.
\end{theorem}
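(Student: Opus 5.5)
The plan is to imitate Proposition~\ref{prop:block-rrt23-fails}: I will write down an explicit \(2\)-bounded coloring of \([A]^3\) with empty support, and then use Lemma~\ref{lem:block-support} to show that every infinite symmetric \(Y\subseteq A\) contains two distinct triples of the same color. The coloring will follow the ``complementary configuration'' mechanism described at the start of this section. The point is that a triple meeting exactly two blocks has a canonical partner, namely its complement inside the union of those two blocks.

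\emph{The coloring.} Let \(t\in[A]^3\). If \(t\) meets exactly two blocks \(B_i\) and \(B_j\), it has two atoms in one of these blocks and one atom in the other. Put
\[
    t^*=(B_i\cup B_j)\setminus t .
\]
Then \(t^*\) is again a triple. It has one atom in the block where \(t\) has two, and two atoms in the other block. So \(t^*\) meets exactly the same two blocks, \(t^*\neq t\), and \((t^*)^*=t\). Define
\[
    \chi(t)=\{t,t^*\}
\]
for such \(t\), and \(\chi(t)=t\) for every other triple (a whole block, or a triple meeting three distinct blocks).

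\emph{Boundedness and support.} Each fiber of \(\chi\) is either \(\{t,t^*\}\) or a singleton, so \(\chi\) is \(2\)-bounded. Every \(\pi\in\mathcal G\) preserves each block, so it preserves the set of blocks met by \(t\) and commutes with complementation inside \(B_i\cup B_j\). Hence \(\chi\) is symmetric with empty support. The step that needs care is well-definedness of \(t^*\). This is exactly why the whole-block triple \(B_i\) must receive its own color: its ``complement'' would depend on a choice of a second block, and there is no canonical such choice.

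\emph{No infinite polychromatic set.} Let \(Y\subseteq A\) be infinite and symmetric, with finite support \(E\). By Lemma~\ref{lem:block-support}, \(Y\) contains two distinct whole blocks \(B_i\) and \(B_j\). Choose \(a,a'\in B_i\) distinct and \(b\in B_j\), and set \(t=\{a,a',b\}\). Then \(t,t^*\subseteq B_i\cup B_j\subseteq Y\), \(t\neq t^*\), and \(\chi(t)=\chi(t^*)\). So \(Y\) is not polychromatic, and \(\RRT^3_2(A)\) fails.

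I do not expect a substantive obstacle beyond checking that \(t\mapsto t^*\) is a well-defined, fixed-point-free, \(\mathcal G\)-equivariant involution on the triples that meet exactly two blocks.
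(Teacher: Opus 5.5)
Your proposal is correct and matches the paper's proof: your complement \((B_i\cup B_j)\setminus t\) is exactly the paper's block-complement \(T^*=(B_i\setminus\{a,b\})\cup(B_j\setminus\{x\})\), and your coloring \(\{t,t^*\}\) on bicommunal triples with singleton fibers elsewhere is the same \(2\)-bounded, empty-support coloring. You also finish the same way, using Lemma~\ref{lem:block-support} to put two whole blocks inside \(Y\). The only difference is cosmetic: you color non-bicommunal triples by \(t\) itself rather than \(\{t\}\), which causes no collision of colors since atoms are not sets.
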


\begin{proof}
\begin{samepage}
We define a \(2\)-bounded coloring \(c:[A]^3\to C\).  Call a triple \(T\in[A]^3\) \emph{bicommunal} if it meets exactly two blocks, with intersection sizes \(2\) and \(1\).  Thus
\[
    T=\{a,b,x\},
\]
where \(a,b\in B_i\), \(x\in B_j\), and \(i\neq j\).  Define the block-complement of \(T\) by
\[
    T^*=(B_i\setminus\{a,b\})\cup (B_j\setminus\{x\}).
\]
Then \(T^*\) is again a bicommunal triple, \(T^*\neq T\), and \((T^*)^*=T\).
\end{samepage}

Set \(c(T)=\{T,T^*\}\) for bicommunal triples \(T\), and \(c(S)=\{S\}\) for all other triples \(S\).  This coloring is \(2\)-bounded.  It is also symmetric with empty support, since every permutation preserving the block partition sends block-complements to block-complements.

Let \(Y\subseteq A\) be infinite, and let \(E\) be a finite support for \(Y\).  By Lemma~\ref{lem:block-support}, there are distinct blocks \(B_i,B_j\) disjoint from \(E\) such that
\[
    B_i\cup B_j\subseteq Y.
\]
Choose any bicommunal triple \(T\subseteq B_i\cup B_j\).  Then \(T^*\subseteq B_i\cup B_j\) as well, so \(T,T^*\in[Y]^3\).  But \(T\neq T^*\) and \(c(T)=c(T^*)\).

Thus \(Y\) is not polychromatic.  Since \(Y\) was arbitrary, there is no infinite polychromatic subset of \(A\), and \(\RRT^3_2(A)\) fails.
\end{proof}

\begin{corollary}
The classes \(\RRTFin{2}{2}\) and \(\RRTFin{3}{2}\) are consistently distinct: in the \(3\)-block model, \(A\in \RRTFin{3}{2}\setminus \RRTFin{2}{2}\).  In particular, \(\ZF\) does not prove \(\RRTFin{3}{2}\subseteq \RRTFin{2}{2}\).
\end{corollary}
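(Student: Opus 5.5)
The plan is to read off the corollary from the two results already established for the \(3\)-block model, followed by the standard transfer to \(\ZF\).

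\emph{Membership in \(\RRTFin{3}{2}\).} By Theorem~\ref{thm:block-rrt32-fails}, \(\RRT^3_2(A)\) fails in the \(3\)-block model. By definition, this says \(A\in\RRTFin{3}{2}\).

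\emph{Non-membership in \(\RRTFin{2}{2}\).} By Theorem~\ref{thm:block-rrt22-holds}, \(\RRT^2_2(A)\) holds in the same model, so \(A\notin\RRTFin{2}{2}\). Together these give \(A\in\RRTFin{3}{2}\setminus\RRTFin{2}{2}\) in \(\ZFA\). In particular, the inclusion \(\RRTFin{3}{2}\subseteq\RRTFin{2}{2}\) fails in that model, and the two classes are distinct there.

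\emph{Transfer to \(\ZF\).} This is the only step needing care, and it follows the normalization in Section~\ref{sec:independence}. Replacing each coloring \(\chi\) by its fiber map \(x\mapsto\chi^{-1}(\chi(x))\) makes the statements ``\(\RRT^2_2(A)\)'' and ``\(\neg\RRT^3_2(A)\)'' bounded assertions about objects of low rank over \(A\), namely subsets of \(\mathcal P([A]^2)\) and \(\mathcal P([A]^3)\) together with subsets of \(A\). The Jech--Sochor first embedding theorem \cite[Theorem~6.1]{JechAC} then yields a symmetric model of \(\ZF\) with a set \(\tilde A\) satisfying \(\RRT^2_2(\tilde A)\) and \(\neg\RRT^3_2(\tilde A)\).

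Consequently \(\ZF\) does not prove \(\RRTFin{3}{2}\subseteq\RRTFin{2}{2}\), and it does not prove the equality of the two classes either. I expect no real obstacle here: all the combinatorial work was done in the two theorems, and the embedding argument is the same routine one used throughout the paper.
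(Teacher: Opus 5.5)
Your proposal is correct and follows the paper's proof: it combines Theorem~\ref{thm:block-rrt22-holds} and Theorem~\ref{thm:block-rrt32-fails} in the \(3\)-block model and then uses the Jech--Sochor transfer with the fiber-map normalization from Section~\ref{sec:independence}. Your added remark that equality of the two classes is also not provable follows at once from the non-provability of the inclusion.
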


\begin{proof}
This follows from Theorem~\ref{thm:block-rrt22-holds} and Theorem~\ref{thm:block-rrt32-fails}, together with the transfer described in Section~\ref{sec:independence}.
\end{proof}

\begin{remark}
The block-complement construction generalizes formally: in a \(2r\)-block model, an \(r\)-element subset of a block can be paired with its complement, giving a \(2\)-bounded coloring that defeats \(\RRT^r_2\).  What is not automatic is the positive half needed to turn this into a full strict chain in the arity parameter.  One would need to prove, for example, that the relevant lower-arity bounded colorings are forced to be injective off a finite support.  This reduces to a finite orbit-equivalence calculation for products of symmetric groups.  We have therefore left the general strict hierarchy as an open problem rather than building it into the main theorem statement.
\end{remark}

\begin{question}
For fixed \(m\geq 2\), is it consistent with \(\ZF\) that the chain
\[
    \RRTFin{m}{2}
    \subseteq
    \RRTFin{m}{3}
    \subseteq
    \RRTFin{m}{4}
    \subseteq\cdots
\]
is strict at every level?  The \(3\)-block model shows consistency of the first strict inclusion when \(m=2\).
\end{question}

\begin{question}\label{q:m-direction}
For fixed \(n\geq 2\), what is the exact relationship in \(\ZF\) between the classes
\[
    \RRTFin{m}{n}\qquad (m\geq 2)?
\]
The \(3\)-block model shows that arity matters, but it does not by itself produce a monotone hierarchy in \(m\).
\end{question}

\section{Summary and open questions}

The Gowers hierarchy collapses completely:
\[
    G_k\text{-}\Fin=\HFin=\{X:[X]^{<\omega}\text{ is Dedekind-finite}\}
\]
for every \(k\geq 1\).  The rainbow and canonical Ramsey classes behave differently.  They form genuine finiteness classes inside \(\DFin\), but they are independent from \(\HFin\), and therefore independent from the entire collapsed Gowers hierarchy.

One schematic way of summarizing the inclusions proved above is the following:
\[
\begin{array}{c}
\DFin \\[0.6em]
\cup \\[0.2em]
\CRTFin \\[0.6em]
\cup \\[0.2em]
\BFin,\quad \RnFin{n},\quad \RRTFin{2}{n}
\end{array}
\]
Here the bottom row indicates three classes which each embed into \(\CRTFin\), not three classes whose mutual relationships are being asserted.  Separately,
\[
    \Fin\subseteq \HFin=G_k\text{-}\Fin\subseteq \DFin,
\]
and \(\HFin\) is independent from both \(\CRTFin\) and \(\RRTFin{m}{n}\) in the sense proved above.
We close with the main remaining questions.

\begin{question}
Determine the exact relationship in \(\ZF\) between \(\CRTFin\) and the Ramsey finiteness classes \(\RnFin{n}\).  In particular, for which \(n\) is it consistent that the inclusion \(\RnFin{n}\subseteq\CRTFin\) is strict?
\end{question}

\begin{question}
Determine the full two-parameter structure of the rainbow classes \(\RRTFin{m}{n}\).  Are the monotone inclusions in the \(n\)-direction always strict?  Which inclusions in the \(m\)-direction are provable or refutable in \(\ZF\)?
\end{question}

\begin{question}
Can the block-complement obstruction be generalized to a complete family of Fraenkel--Mostowski models witnessing strictness in the \(m\)-direction?
\end{question}

\begin{question}
Are there natural restricted rainbow finiteness notions, analogous to the intermediate \(H_2\), \(H_3\), \(H\) hierarchy of \cite{BCF}, that fail to collapse in the way the unrestricted Gowers classes collapse?
\end{question}

\section*{Acknowledgements}

This paper forms part of the author's Ph.D.\ thesis at the University of Toronto.  The author thanks her supervisors, Spencer Unger and Asaf Karagila, for their guidance and support, and thanks Asaf Karagila in particular for his detailed comments on earlier drafts of this paper.

\bibliographystyle{amsplain}
\bibliography{mybib}

\end{document}